\documentclass[11pt]{article}

%%%%%%%%%%%%%%%%%%%%%%%%%%%%%%%%%%%%%%%%%%%%%%%%%%%%%%%%%%%%%%%%%

\usepackage[margin=1in,footskip=0.25in]{geometry}

\usepackage{amsmath,amssymb,amsthm,amsfonts}

\usepackage{graphpap}
\usepackage{graphics}
\usepackage{epsfig}
\usepackage{pstricks}

\usepackage[sf, bf, small]{titlesec}
\usepackage{graphicx, psfrag}
\usepackage{latexsym}

%%%%%%%%%%%%%%%%%%%%%%%%%%%%%%%%%%%%%%%%%%%%%%%%%%%%%%%%%%%%%%%%%

\theoremstyle{plain}
\newtheorem{Thm}{Theorem}[section]
\newtheorem{Lem}[Thm]{Lemma}
\newtheorem{Prop}[Thm]{Proposition}
\newtheorem{Cor}[Thm]{Corollary}

\theoremstyle{definition}

\newtheorem{Rem}[Thm]{Remark}
\newtheorem{Exam}[Thm]{Example}

%%%%%%%%%%%%%%%%%%%%%%%%%%%%%%%%%%%%%%%%%%%%%%%%%%%%%%%%%%%%%%%%%

\title{On the phylogeny graphs of degree-bounded digraphs}

\author{
\textsc{Seung Chul LEE}
\footnote{Department of Mathematics Education,
Seoul National University, Seoul 151-742, Korea.
\textit{E-mail}: \texttt{mobum83@nate.com}}
\and
\textsc{Jihoon CHOI}
\footnote{Department of Mathematics Education,
Seoul National University, Seoul 151-742, Korea.
\textit{E-mail}: \texttt{gaouls@snu.ac.kr}}
\and
\textsc{Suh-Ryung KIM}
\footnote{Department of Mathematics Education,
Seoul National University, Seoul 151-742, Korea.
\textit{E-mail}: \texttt{srkim@snu.ac.kr}}
\and
\textsc{Yoshio SANO}
\footnote{Division of Information Engineering, Faculty of Engineering, Information and Systems,
University of Tsukuba, Ibaraki 305-8573, Japan.
\textit{E-mail}: \texttt{sano@cs.tsukuba.ac.jp}}}

\date{}

%%%%%%%%%%%%%%%%%%%%%%%%%%%%%%%%%%%%%%%%%%%%%%%%%%%%%%%%%%%%%%%%%

\begin{document}

\maketitle

%%%%%%%%%%%%%%%%%%%%%%%%%%%%%%%%%%%%%%%%%%%%%%%%%%%%%%%%%%%%%%%%%%%%%%%%%

\begin{abstract}
Hefner \textit{et al.} [K. A. S. Hefner, K. F. Jones, S. -R. Kim, R. J. Lundgren and F. S. Roberts: $(i,j)$ competition graphs,
 \emph{Discrete Applied Mathematics} \textbf{32} (1991) 241--262]
characterized acyclic digraphs each vertex of which has inderee and outdegree at most two and whose competition graphs are interval.
They called acyclic digraphs each vertex of which has inderee and outdegree at most two $(2,2)$ digraphs.
%Phylogeny graphs are a variant of competition graphs and were introduced by Roberts and Sheng.
In this paper, we study the phylogeny graphs of $(2,2)$ digraphs.
Especially, we give a sufficient condition and necessary conditions for $(2,2)$ digraphs having chordal phylogeny graphs.
Phylogeny graphs are also called moral graphs in Bayesian network theory.
Our work is motivated by problems related to evidence propagation in a Bayesian network for which it is useful to know which acyclic digraphs have their moral graphs being chordal.
\end{abstract}

%%%%%%%%%%%%%%%%%%%%%%%%%%%%%%%%%%%%%%%%%%%%%%%%%%%%%%%%%%%%%%%%%%%%%%%%%

\noindent
\textbf{Keywords:}
competition graph,
phylogeny graph,
moral graph,
$(2,2)$ digraph,
chordal graph.

\noindent
\textbf{2010 Mathematics Subject Classification:} 05C20, 05C75

%%%%%%%%%%%%%%%%%%%%%%%%%%%%%%%%%%%%%%%%%%%%%%%%%%%%%%%%%%%%%%%%%%%%%%%%%%%%%%%%%%%
\section{Introduction}
%%%%%%%%%%%%%%%%%%%%%%%%%%%%%%%%%%%%%%%%%%%%%%%%%%%%%%%%%%%%%%%%%%%%%%%%%%%%%%%%%%%

Through this paper, we deal with simple graphs and simple digraphs.
In a digraph, we sometimes represent an arc $(u,v)$ by $u \rightarrow v$.

Given an acyclic digraph $D$, the \emph{competition graph} of $D$, denoted by $C(D)$, is the graph having vertex set $V(D)$ and edge set $\{ uv \mid (u,w),  (v,w) \in A(D) \text{ for some } w \in V(D)\}$.
A graph $G$ is called an \emph{interval graph} if we can assign to each vertex $x$ of $G$ a real interval $J(x)$ so that, whenever $x\neq y$, $xy \in E(G)$ if and only if $J(x) \cap J(y) \neq \emptyset$.
Cohen~\cite{Cohen} introduced the notion of competition graphs in the study on predator-prey concepts in ecological food webs.
Cohen's empirical observation that real-world competition graphs are usually interval graphs
had led to a great deal of research on the structure of competition graphs
and on the relationship between the structure of digraphs and their corresponding competition graphs.
In the same vein, various variants of competition graphs have been introduced and studied.
For recent work related to competition graphs, see~\cite{FM, KA, Kuhl, LiChang, ZR}.

Steif~\cite{Steif} showed that it might be difficult
to find the structural properties of acyclic digraphs whose competition graphs are interval.
In that respect, Hefner~{\it et al.}~\cite{Kim} placed restrictions on the indegree and the outdegree of vertices of acyclic digraphs
to obtain the list of forbidden subdigraphs for acyclic digraphs whose competition graphs are interval.

The notion of phylogeny graphs was introduced by Roberts and Sheng~\cite{RobertsSheng}
as a variant of competition graphs.
(See also \cite{Ha05, PS13, RS98, RS99, RS00, ZH06} for study on phylogeny graphs.)
Given an acyclic digraph $D$, the \emph{underlying graph} of $D$, denoted by $U(D)$,
is the graph with vertex set $V(D)$ and edge set $\{ xy \mid (x,y) \in A(D) \text{ or } (y,x) \in A(D) \}$.
The \emph{phylogeny graph} of an acyclic digraph $D$, denoted by $P(D)$, is
the graph with vertex set $V(D)$ and edge set $E(U(D)) \cup E(C(D))$.
For example, given an acyclic digraph $D$ in Figure~\ref{fig:eg31}(a),
the competition graph of $D$ is the graph $C(D)$ in Figure~\ref{fig:eg31}(b),
and the phylogeny graph of $D$ is the graph $P(D)$ in Figure~\ref{fig:eg31}(c).

\begin{figure}[h]
\psfrag{a}{(a)$~D$}
\psfrag{b}{(c)$ $}
\psfrag{c}{(c)$~P(D)$ }
\psfrag{d}{(b)$ ~C(D)$}
\psfrag{1}{$v_1$}
\psfrag{2}{$v_2$}
\psfrag{3}{$v_3$}
\psfrag{4}{$v_4$}
\psfrag{5}{$v_5$}
\psfrag{6}{$v_6$}
\psfrag{7}{$v_7$}
\psfrag{8}{$v_8$}
\psfrag{9}{$v_9$}
\centering{
\includegraphics[height=3.4cm]{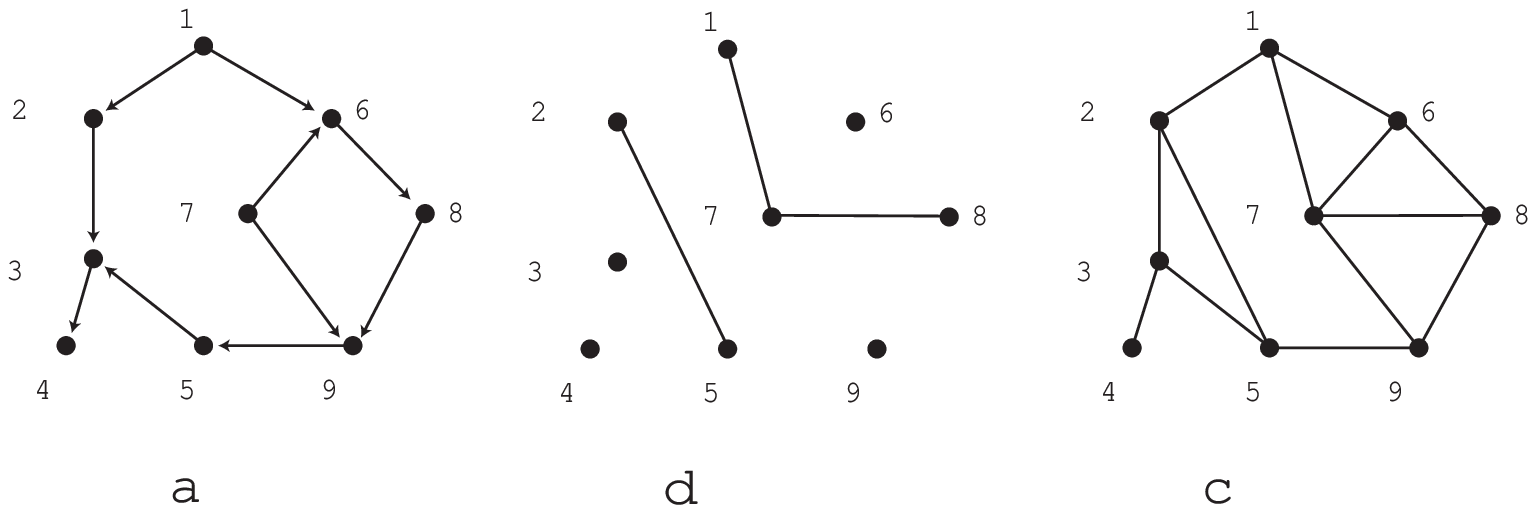}}
\caption{(a) An acyclic digraph $D$,
(b) The competition graph $C(D)$ of $D$,
(c) The phylogeny graph $P(D)$ of $D$}
\label{fig:eg31}
\end{figure}

``Moral graphs" having arisen from studying Bayesian networks are the same as phylogeny graphs.
One of the best-known problems, in the context of Bayesian networks,
is related to the propagation of evidence.
It consists of the assignment of probabilities to the values of the rest of the variables,
once the values of some variables are known.
Cooper~\cite{Cooper} showed that this problem is NP-hard.
Most noteworthy algorithms for this problem are given by
Pearl~\cite{Pearl}, Shachter~\cite{Shachter} and by Lauritzen and Spiegelhalter~\cite{Lauritzen}.
Those algorithms include a step of triangulating a moral graph,
that is, adding edges to a moral graph to form a chordal graph.

A graph $G$ is said to be \emph{chordal} if every cycle in $G$ of length greater than $3$ has a chord, namely, an edge joining two nonconsecutive vertices on the cycle, that is, $G$ does not contain a cycle of length at least $4$ as an induced subgraph.
A necessary and sufficient condition for a graph being interval is that the graph does not contain a cycle of length at least $4$ as an induced subgraph and the complement of the graph is transitively orientable.
This implies that an interval graph is chordal.

As triangulations of moral graphs play an important role in algorithms for propagation of evidence in a Bayesian network, studying chordality of the phylogeny graphs of acyclic digraphs is meaningful.
Yet, characterizing the acyclic digraphs whose phylogeny graphs are chordal seems to be not easier than
characterizing the acyclic digraphs whose competition graphs are interval.
In this respect, hoping to provide insights for the further research, we begin with ``$(2,2)$ digraphs" to attack the problem.
A \emph{$(2,2)$ digraph} is an acyclic digraph such that each vertex has both outdegree and indegree at most two.
Hefner~{\it et al.}~\cite{Kim} characterized $(2,2)$ digraphs whose competition graphs are interval.

In this paper, we study the phylogeny graphs of $(2,2)$ digraphs.
Especially, we give a sufficient condition and necessary conditions for $(2,2)$ digraphs having chordal phylogeny graphs.

%%%%%%%%%%%%%%%%%%%%%%%%%%%%%%%%%%%%%%%%%%%%%%%%%%%%%%%%%%%%%%%%%%%%%%%%%%%%%%%%%%%
%%%%%%%%%%%%%%%%%%%%%%%%%%%%%%%%%%%%%%%%%%%%%%%%%%%%%%%%%%%%%%%%%%%%%%%%%%%%%%%%%%%
\section{Preliminaries}
%%%%%%%%%%%%%%%%%%%%%%%%%%%%%%%%%%%%%%%%%%%%%%%%%%%%%%%%%%%%%%%%%%%%%%%%%%%%%%%%%%%

%%%%%%%%%%%%%%%%%%%%%%%%%%%%%%%%%%%%%%%%%%%%%%%%%%%%%%%%%%%%%%%%%%%%%%%%%%%%%%%%%%%
\subsection{Properties of chordal graphs}
%%%%%%%%%%%%%%%%%%%%%%%%%%%%%%%%%%%%%%%%%%%%%%%%%%%%%%%%%%%%%%%%%%%%%%%%%%%%%%%%%%%

We first see some properties of chordal graphs
which will be used in characterizing the $(2,2)$ digraphs
whose phylogeny graphs are chordal.
The following propositions are easy to check.

\begin{Prop} \label{prop:3}
Any induced subgraph of a chordal graph is also a chordal graph.
\end{Prop}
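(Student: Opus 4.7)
The plan is to prove the contrapositive by directly invoking the definition of chordality in terms of forbidden induced subgraphs, exploiting the fact that ``induced subgraph'' is a transitive notion.

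Let $H$ be an induced subgraph of a chordal graph $G$. First I would suppose, for contradiction, that $H$ is not chordal. By the definition recalled in the excerpt, this means $H$ contains an induced subgraph $C$ that is a cycle of length at least $4$. Next, I would observe that if $H$ is an induced subgraph of $G$ and $C$ is an induced subgraph of $H$, then $C$ is an induced subgraph of $G$: indeed, for any two vertices $u,v \in V(C) \subseteq V(H) \subseteq V(G)$, we have $uv \in E(G)$ iff $uv \in E(H)$ (because $H$ is induced in $G$) iff $uv \in E(C)$ (because $C$ is induced in $H$). Thus $G$ contains an induced cycle of length at least $4$, contradicting the chordality of $G$.

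Since the argument is essentially a tautology once one unfolds the definition of chordality via forbidden induced subgraphs together with the transitivity of the ``induced subgraph'' relation, there is no real obstacle; the only thing to be careful about is citing the characterization of chordal graphs as exactly those with no induced cycle of length $\ge 4$, which the paper has just stated in the paragraph preceding the proposition.
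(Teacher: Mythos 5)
Your proof is correct and is exactly the standard argument the paper has in mind when it declares this proposition ``easy to check'' (the paper gives no written proof): chordality is defined by forbidding induced cycles of length at least $4$, and the induced-subgraph relation is transitive, so the property is inherited. Nothing to add.
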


\begin{Prop} \label{prop:2}
A chordal graph containing a cycle of length $n$ has at least $2n-3$ edges.
\end{Prop}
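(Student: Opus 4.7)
The plan is to reduce to the case where the chordal graph has exactly $n$ vertices and the cycle is Hamiltonian, and then induct on $n$. Let $G$ be a chordal graph containing a cycle $C=v_1v_2\cdots v_nv_1$ of length $n$. By Proposition~\ref{prop:3}, the subgraph $H=G[\{v_1,\dots,v_n\}]$ is chordal, and clearly $|E(G)| \ge |E(H)|$. So it suffices to prove that every chordal graph $H$ on $n$ vertices that contains a Hamiltonian cycle has at least $2n-3$ edges.

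I will induct on $n$. The base case $n=3$ is immediate: a triangle has $3=2(3)-3$ edges. For the inductive step, assume the claim for all values less than $n$, and let $H$ be chordal on $n\ge 4$ vertices with Hamiltonian cycle $C=v_1v_2\cdots v_nv_1$. Since $H$ is chordal and $n\ge 4$, the cycle $C$ must possess a chord $v_iv_j$ (with $i<j$ and $2\le j-i\le n-2$). Setting $k=j-i$, this chord splits $C$ into two shorter cycles
\[
C_1 \colon v_i,v_{i+1},\dots,v_j,v_i \qquad\text{and}\qquad C_2 \colon v_j,v_{j+1},\dots,v_n,v_1,\dots,v_i,v_j,
\]
of lengths $k+1$ and $n-k+1$, respectively.

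Now let $H_1$ and $H_2$ be the subgraphs of $H$ induced on the vertex sets of $C_1$ and $C_2$. Both are chordal by Proposition~\ref{prop:3}, both contain a Hamiltonian cycle (namely $C_1$ and $C_2$), and both have fewer than $n$ vertices since $2\le k\le n-2$. By the induction hypothesis,
\[
|E(H_1)| \ge 2(k+1)-3 = 2k-1, \qquad |E(H_2)| \ge 2(n-k+1)-3 = 2n-2k-1.
\]
The key observation is that $V(H_1)\cap V(H_2)=\{v_i,v_j\}$, so the only edge that could lie in both $E(H_1)$ and $E(H_2)$ is the chord $v_iv_j$ itself. Hence
\[
|E(H)| \ge |E(H_1)\cup E(H_2)| \ge |E(H_1)|+|E(H_2)|-1 \ge (2k-1)+(2n-2k-1)-1 = 2n-3,
\]
which completes the induction.

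The whole argument is short, and the only subtle point is the edge-count at the end: one must observe that the two induced subgraphs meet in exactly two vertices, so their edge sets overlap in at most one edge. Beyond that, the reduction to the Hamiltonian case via Proposition~\ref{prop:3} and the splitting of the cycle by a chord are routine.
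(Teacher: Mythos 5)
Your proof is correct. The paper offers no proof of this proposition (it is listed among statements that are ``easy to check''), so there is nothing to compare against; your chord-splitting induction on a Hamiltonian chordal graph, with the observation that the two induced pieces share only the chord $v_iv_j$, is the standard argument and fills the gap cleanly.
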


For a graph $G$ and a vertex $v$ of $G$,
a \emph{neighbor} of $v$ in $G$ is a vertex adjacent to $v$ in $G$,
and the set of neighbors of $v$ in $G$ is denoted by $N_G(v)$.
Then the following holds.

\begin{Prop} \label{prop:1}
Let $G$ be a chordal graph.
For any cycle $C$ in $G$ and any edge $xy$ on $C$,
there exists a vertex on $C$ that is a common neighbor of $x$ and $y$ in $G$.
\end{Prop}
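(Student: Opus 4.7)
The plan is to prove the proposition by induction on the length $n$ of the cycle $C$.

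For the base case $n=3$, the cycle $C$ is a triangle on $x$, $y$, and a third vertex $z$; this $z$ lies on $C$ and is adjacent to both $x$ and $y$, so it is the desired common neighbor.

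For the inductive step I would assume $n \ge 4$ and that the statement has been established for every shorter cycle in any chordal graph. Since $G$ is chordal and $C$ has length at least $4$, $C$ cannot be an induced subgraph of $G$, so there exists a chord, that is, an edge $uv$ of $G$ joining two vertices $u,v$ of $C$ that are not adjacent on $C$. This chord decomposes $C$ into two cycles $C_1$ and $C_2$, each consisting of $uv$ together with one of the two $uv$-arcs of $C$, and each of length strictly less than $n$. Moreover $V(C_1) \cup V(C_2) \subseteq V(C)$, and the edge $xy$ belongs to exactly one of them, say $C_1$. Applying the induction hypothesis to the cycle $C_1$ in the chordal graph $G$ yields a vertex $z$ on $C_1$ that is a common neighbor of $x$ and $y$ in $G$; since $V(C_1) \subseteq V(C)$, the vertex $z$ also lies on $C$, completing the induction.

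There is no serious obstacle here: the argument rests only on the defining property that in a chordal graph every cycle of length at least $4$ has a chord, together with the elementary observation that such a chord partitions the cycle into two strictly shorter cycles, exactly one of which contains any prescribed edge of $C$. The only mild bookkeeping is verifying that the shorter cycle $C_1$ remains a cycle in $G$ (its edges are either edges of $C$ or the chord $uv$, all of which are in $E(G)$) and that the common neighbor produced by the inductive hypothesis stays on $C$, both of which follow from $V(C_1) \subseteq V(C)$.
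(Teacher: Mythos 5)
Your proof is correct and is essentially the same argument as the paper's: both use the fact that a chord of a cycle of length at least $4$ splits it into two strictly shorter cycles, exactly one of which contains the edge $xy$, the only difference being that you phrase it as induction on the cycle length while the paper phrases it as a minimal counterexample.
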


\begin{proof}
We show by contradiction.
Suppose that
there exists a cycle $C$ in $G$ which has
an edge $xy \in E(C)$ satisfying $N_G(x) \cap N_G(y) \cap V(C) = \emptyset$.
We take a cycle of the shortest length among such cycles.
Let $C :=v_0v_1\ldots v_{k-1}v_0$ be such a cycle.
If $k=3$, then
$N_G(x) \cap N_G(y) \cap V(C) \neq \emptyset$ for any $xy \in E(C)$. Thus $k \geq 4$.
Since $G$ is chordal, there exists a chord $v_iv_j$ of $C$, where $i<j$.
Let $P_1$ and $P_2$ be the two $(v_i,v_j)$-sections of $C$ and consider the cycles $C_1 := P_1+v_iv_j$ and $C_2 := P_2 + v_iv_j$.
Since both $C_1$ and $C_2$ have lengths shorter than $k$,
$N_G(x) \cap N_G(y) \cap V(C_t) \neq \emptyset$ for any $xy \in E(C_t)$ and $t = 1,2$.
This implies that
$N_G(x) \cap N_G(y) \cap V(C) \neq \emptyset$ for any $xy \in E(C)$,
which is a contradiction.
\end{proof}

We say that a vertex $v$ on a cycle $C$ of length at least $4$ in a chordal graph $G$ is a \emph{vertex opposite to a chord of $C$} if the two vertices immediately following and immediately preceding it, respectively, in the sequence of $C$ are adjacent.
(For example, each of the vertices $v_2$ and $v_5$ in Figure~\ref{fig:eg2} is a vertex opposite to a chord.)

\begin{figure}
\psfrag{a}{$v_1$}
\psfrag{b}{$v_2$}
\psfrag{c}{$v_3$}
\psfrag{d}{$v_4$}
\psfrag{e}{$v_5$}
\centering{
\includegraphics[height=3cm]{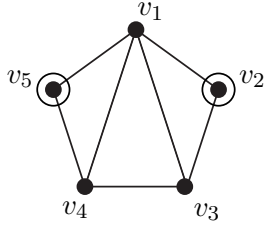}}
\caption{Each of $v_2$ and $v_5$ is a vertex opposite to a chord of the cycle $v_1v_2v_3v_4v_5v_1$.}
\label{fig:eg2}
\end{figure}

\begin{Prop} \label{prop:5}
Each cycle of length at least $4$ in a chordal graph
has at least two nonconsecutive vertices
each of which is opposite to a chord of the cycle.
\end{Prop}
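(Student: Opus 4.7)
I will argue by induction on the length $k$ of the cycle $C$. For the base case $k=4$, write $C=v_0v_1v_2v_3v_0$; since $G$ is chordal, $C$ has a chord, say $v_0v_2$, and then both $v_1$ and $v_3$ have $\{v_0,v_2\}$ as their pair of cycle-neighbors, which are adjacent via the chord. Hence $v_1$ and $v_3$ are both opposite to a chord of $C$, and they are clearly nonconsecutive on $C$.

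For the inductive step, assume the result for all cycles of length between $4$ and $k-1$, and let $C=v_0v_1\cdots v_{k-1}v_0$ with $k\ge 5$. Since $G$ is chordal, $C$ has a chord $v_iv_j$ with $i<j$, which partitions $C$ into two subcycles $C_1=v_iv_{i+1}\cdots v_jv_i$ and $C_2=v_jv_{j+1}\cdots v_{k-1}v_0\cdots v_iv_j$, sharing the edge $v_iv_j$. Both $C_1$ and $C_2$ are cycles in the chordal graph $G$ of length at least $3$ and at most $k-1$. The goal is to produce one vertex $u_1$ from the open arc $v_{i+1},\ldots,v_{j-1}$ and one vertex $u_2$ from the open arc $v_{j+1},\ldots,v_{i-1}$ (indices modulo $k$), each opposite to a chord of $C$. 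Because $u_1$ and $u_2$ then lie in disjoint open arcs of $C$, they are nonconsecutive on $C$.

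To extract $u_1$ from $C_1$ I split into two cases. If $C_1$ is a triangle (so $j=i+2$), take $u_1:=v_{i+1}$; its cycle-neighbors on $C$ are $v_i$ and $v_j$, which are adjacent in $G$ via the chord, so $u_1$ is opposite to a chord of $C$. If $|C_1|\ge 4$, apply the inductive hypothesis to $C_1$ inside the chordal graph $G$ to obtain two nonconsecutive vertices $a,b$ on $C_1$ each opposite to a chord of $C_1$. Since $v_i$ and $v_j$ are adjacent, hence consecutive, on $C_1$, at most one of $a,b$ belongs to $\{v_i,v_j\}$; choose $u_1\in\{a,b\}\setminus\{v_i,v_j\}$. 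Then $u_1\in\{v_{i+1},\ldots,v_{j-1}\}$, so its two neighbors on $C_1$ coincide with its two neighbors on $C$, and their adjacency in $G$ (which is what ``opposite to a chord of $C_1$'' asserts) is exactly the condition that $u_1$ is opposite to a chord of $C$. The construction of $u_2$ from $C_2$ is entirely symmetric.

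The main obstacle is ensuring that the vertex produced by the inductive hypothesis on a subcycle lies strictly in the interior of the corresponding arc, so that its cycle-neighborhood transfers cleanly from the subcycle to $C$; this is resolved by the observation that $v_i$ and $v_j$ are consecutive on each subcycle, so any pair of nonconsecutive vertices on that subcycle cannot both lie in $\{v_i,v_j\}$. Once $u_1$ and $u_2$ are placed in disjoint open arcs of $C$, nonconsecutiveness on $C$ is automatic, completing the induction.
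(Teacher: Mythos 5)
Your proof is correct, but it proceeds by a genuinely different route from the paper's. The paper passes to the induced subgraph $G'=G[V(C)]$, which is chordal, and invokes Dirac's theorem that every non-complete chordal graph has two nonadjacent simplicial vertices; each simplicial vertex of $G'$ is automatically opposite to a chord of $C$ (its two cycle-neighbors lie in its clique neighborhood), and nonadjacency in $G'$ forces nonconsecutiveness on $C$. You instead run an induction on the cycle length, splitting $C$ along a chord $v_iv_j$ into two subcycles and harvesting one good vertex from the interior of each arc; the key observations --- that $v_i,v_j$ are consecutive on each subcycle so at most one of the two vertices supplied by the inductive hypothesis can be lost to $\{v_i,v_j\}$, and that a vertex interior to an arc has the same pair of cycle-neighbors on the subcycle as on $C$ --- are exactly what is needed to transfer the property back to $C$, and placing the two chosen vertices in disjoint open arcs separated by $v_i$ and $v_j$ guarantees nonconsecutiveness. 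Your argument is longer but entirely self-contained and elementary, using only the definition of chordality; the paper's is shorter but leans on the (unproved here) simplicial-vertex theorem of Dirac. Both are valid, and your inductive decomposition is in the same spirit as the paper's proof of its Proposition 2.3, so it fits the surrounding toolkit naturally.
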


\begin{proof}
Let $G$ be a chordal graph,
$C$ be a cycle of length at least $4$ in $G$,
and $G'$ be the subgraph of $G$ induced by the set of vertices of $C$,
that is, $G'=G[V(C)]$.
Then $G'$ is chordal by Proposition~\ref{prop:3}.
If $G'$ is complete, then the statement is trivially true.
Suppose that $G'$ is not complete.
As Dirac showed that every non-complete chordal graph has at least two nonadjacent simplicial vertices in 1961,
there exist two nonadjacent simplicial vertices $u$ and $v$ in $G'$.
Then each of $u$ and $v$ is obviously opposite to a chord of $C$.
Since $u$ and $v$ are not adjacent in the induced subgraph $G'$ of $G$,
$u$ and $v$ are not consecutive on $C$.
\end{proof}

We call a graph isomorphic to the graph
$G$ defined by $V(G) = \{v_1, \ldots, v_7\}$
and $E(G) = \{ v_iv_j \mid 1 \leq i < j \leq 7, j-i \leq 2 \}$
a \emph{W-configuration} (see Figure~\ref{fig:eg5}).

\begin{figure}
\centering{
\includegraphics[height=3cm]{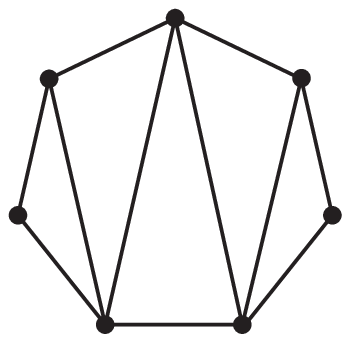}}
\caption{A $W$-configuration}
\label{fig:eg5}
\end{figure}

\begin{Prop} \label{prop:6}
If a chordal graph with the degree of each vertex at most four contains a cycle of length $7$,
then it contains a $W$-configuration as a subgraph.
\end{Prop}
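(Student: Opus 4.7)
The plan is to work entirely inside $G' := G[V(C)]$, where $C = u_1 u_2 \cdots u_7 u_1$ is the given $7$-cycle. By Proposition~\ref{prop:3}, $G'$ is chordal, and plainly $G'$ has maximum degree at most $4$. Since $G'$ contains the $7$-cycle $C$, Proposition~\ref{prop:2} yields $|E(G')| \ge 11$, so $G'$ has at least four ``chords'' of $C$ (edges of $G'$ not on $C$). The degree bound forces each vertex of $C$ to lie on at most two chords. Each chord $u_i u_j$ has \emph{cyclic length} $\min(|i-j|,\,7-|i-j|)\in\{2,3\}$, and a vertex of $C$ is opposite to a chord of $C$ in $G'$ precisely when a length-$2$ chord passes over it.

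By Proposition~\ref{prop:5} applied to $C$, there exist at least two nonconsecutive vertices of $C$ opposite to chords; equivalently, $G'$ contains two length-$2$ chords whose middles are nonconsecutive on $C$. Up to rotating and reflecting $C$, we may assume one of these chords is $u_2 u_7$ (middle $u_1$), and that the second middle lies in $\{u_3, u_4\}$, the case $u_5$ being symmetric to $u_3$. This yields two principal sub-cases to handle.

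In each sub-case, the strategy is to apply Proposition~\ref{prop:5}, and where convenient Proposition~\ref{prop:1}, to cycles of length at least $4$ that arise in $G'$ by combining cycle edges of $C$ with the already-known chords --- for instance, the $6$-cycle $u_2 u_3 u_4 u_5 u_6 u_7 u_2$ obtained once the chord $u_2 u_7$ is available. Each such application produces additional length-$2$ chords; the cap of two chords per vertex then prevents the forced chords from stacking at any one vertex and so spreads them along $C$. The target $W$-configuration on $v_1,\ldots,v_7$ contains, up to reversal, the unique Hamiltonian cycle $v_2 v_1 v_3 v_5 v_7 v_6 v_4 v_2$, whose complement inside the $W$-edges consists of exactly four edges (namely $v_2v_3$, $v_3v_4$, $v_4v_5$, $v_5v_6$). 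Matching $C$ to this Hamiltonian cycle therefore reduces the problem to producing four specific chord-positions of $C$ in $G'$.

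The main obstacle is completing this case analysis: one must verify that, in every sub-case and for some rotation or reflection of $C$, the chords forced by the above arguments include the four chord-positions corresponding to the complementary $W$-edges above. The degree bound is essential, since without it one cannot control how many chords emanate from a single vertex and the argument would fork into many more sub-configurations. Once the four required chords are located, the seven vertices of $C$ together with the seven cycle edges of $C$ and those four chords give the required $W$-configuration directly.
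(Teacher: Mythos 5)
Your setup is sound as far as it goes: working in $G'=G[V(C)]$, counting at least four chords from $|E(G')|\ge 11$, capping each vertex at two chords, and observing that the $W$-configuration has a unique Hamiltonian cycle whose complement consists of four specific edges are all correct reductions (the paper's proof implicitly uses the same facts). But the proposal stops exactly where the proof has to begin. You write that ``the main obstacle is completing this case analysis,'' and that obstacle is the entire content of the proposition: nothing in what you have written actually forces the four required chord-positions to appear. Two of those positions are, in your labelling, the length-$3$ chords $u_3u_7$ and $u_4u_7$, so a mechanism that only ``produces additional length-$2$ chords'' cannot by itself reach the target. Moreover, the claim that each application of Proposition~\ref{prop:5} to a sub-cycle produces \emph{additional} chords is unjustified: the two vertices opposite to chords that it returns may be witnessed by chords you already have, so repeated applications need not make progress without a careful accounting of which chords are excluded by the degree bound. (A smaller slip: with the first chord normalized to $u_2u_7$, the reflection fixing $u_1$ pairs $u_5$ with $u_4$ and $u_6$ with $u_3$, not $u_5$ with $u_3$; the representative set $\{u_3,u_4\}$ is still right.)

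For comparison, the paper gets traction differently: from $\sum\deg_H(v)\ge 22$ and the degree cap it extracts a vertex of degree exactly $4$ in $H=G[V(C)]$, splits into four cases according to which two non-consecutive cycle vertices are its extra neighbours, kills two of the cases by exhibiting forced holes or degree violations, and in the remaining two cases uses Proposition~\ref{prop:1} on short cycles built from the known chords to pin down the last two edges of the $W$-configuration. Something of that concreteness --- a finite, exhaustive case split in which every branch either reaches a contradiction or exhibits all eleven edges --- is what your write-up still needs to supply.
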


\begin{proof}
Let $G$ be a chordal graph with the degree of each vertex at most four
and let $C: = v_1 \cdots v_7 v_1$ be a cycle of length $7$ of $G$,
and let $H$ be the subgraph of $G$ induced by the vertex set of $C$.
Since $G$ is chordal, so is $H$ by Proposition~\ref{prop:3}.
Since $H$ is a chordal graph containing a cycle of length $7$,
$|E(H)| \geq 11$ by Proposition~\ref{prop:2}.
Then
\[
\sum_{v \in V(H)} \deg_H(v) = 2 |E(H)| \geq 2 \times 11 = 22.
\]
Therefore there exists a vertex on $C$ of degree $4$ in $H$ by the pigeon-hole principle and the hypothesis.
Without loss of generality, we may assume that $v_1$ is a vertex of degree $4$.
By symmetry, it is sufficient to consider the following cases for
the possible pairs of neighbors of $v_1$ other than $v_2$ and $v_7$:
(a) $\{ v_5, v_6 \}$;
(b) $\{ v_4, v_6 \}$;
(c) $\{ v_3, v_6 \}$;
(d) $\{ v_4, v_5 \}$
(see Figure~\ref{fig:eg6}).

\begin{figure}[h]
\psfrag{A}{(a)}
\psfrag{B}{(b)}
\psfrag{C}{(c)}
\psfrag{D}{(d)}
\psfrag{1}{$v_1$}
\psfrag{2}{$v_2$}
\psfrag{3}{$v_3$}
\psfrag{4}{$v_4$}
\psfrag{5}{$v_5$}
\psfrag{6}{$v_6$}
\psfrag{7}{$v_7$}
\centering{
\includegraphics[height=3.5cm]{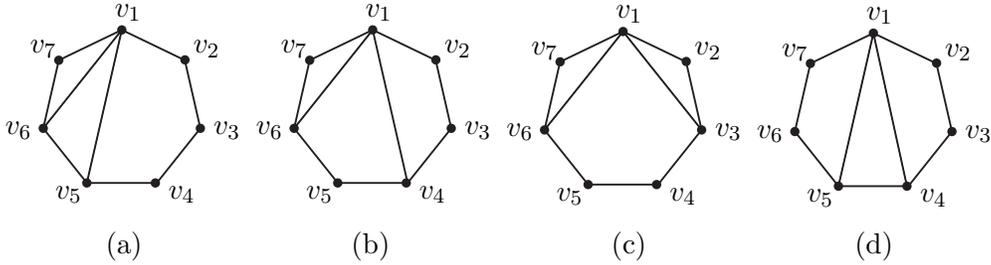}}
\caption{The possible neighbors of $v_1$}
\label{fig:eg6}
\end{figure}
As a matter of fact, the cases (b) and (c) cannot happen.
Suppose to the contrary that (b) happened.
Then each of the $4$-cycles $v_1v_4v_5v_6v_1$ and $v_1v_2v_3v_4v_1$ must contain a chord.
By the way, $v_1$ already has degree $4$, so $v_4v_6$ and $v_4v_2$ must be the chords of $v_1v_4v_5v_6v_1$ and $v_1v_2v_3v_4v_1$, respectively.
Then $\deg_G(v_4)\geq 5$, which is a contradiction.
Thus the case (b) cannot happen.
Suppose that (c) happened.
Then the $5$-cycle $v_1v_3v_4v_5v_6v_1$ must contain a chord since $G$ is chordal.
By the hypothesis that each vertex of $G$ has degree at most $4$, $v_3v_5$, $v_4v_6$, $v_3v_6$ are the only possible chords.
If $v_3v_6$ is a chord, then $v_3v_4v_5v_6v_3$ is a hole, which is impossible.
Thus $v_3$ and $v_6$ are not joined.
Now if $v_3v_5$ is a chord, then $v_1v_3v_5v_6v_1$ is a hole, which is impossible again.
Thus $v_4v_6$ is a chord.
Then $v_1v_3v_4v_6v_1$ is a hole and we reach a contradiction.
Hence the case (c) cannot happen.

Now we consider the case (a).
By applying Proposition~\ref{prop:1} to the edge $v_1v_2$ on the cycle $v_1v_2v_3v_4v_5 v_1$,
we conclude that one of $v_3$, $v_4$, $v_5$ is a vertex that is adjacent to both $v_1$ and $v_2$.
Since each vertex has degree at most $4$ by the hypothesis, it must be $v_5$.
By the same proposition applied to the edge $v_2v_3$ on the cycle $v_2v_3v_4v_5v_2$, both of $v_2$ and $v_3$ are adjacent to $v_4$ or $v_5$.
Since $v_5$ is adjacent to four vertices, the possibility of $v_5$ is eliminated and so $v_4$ is adjacent to $v_2$ and $v_3$.
Thus $G$ contains a W-configuration.

We consider the case (d).
By applying Proposition~\ref{prop:1} to the edge $v_1v_5$ on the cycle $v_1v_5v_6v_7v_1$,
we conclude that $v_6$ or $v_7$ is a vertex that is adjacent to both $v_1$ and $v_5$.
Since $v_1$ is already adjacent to four vertices other than $v_6$, $v_6$ is excluded
and so $v_7$ is adjacent to both $v_1$ and $v_5$.
By applying Proposition~\ref{prop:1} to the cycle $v_1v_2v_3v_4v_1$ and the edge $v_1v_4$,
we conclude that $v_2$ is adjacent to both $v_1$ and $v_4$.
Consequently we obtain a W-configuration.
\end{proof}

%%%%%%%%%%%%%%%%%%%%%%%%%%%%%%%%%%%%%%%%%%%%%%%%%%%%%%%%%%%%%%%%%%%%%%%%%%%%%%%%%%%
\subsection{Induced edges of the phylogeny graphs of $(i,j)$ digraphs}
%%%%%%%%%%%%%%%%%%%%%%%%%%%%%%%%%%%%%%%%%%%%%%%%%%%%%%%%%%%%%%%%%%%%%%%%%%%%%%%%%%%

We call an edge in the phylogeny graph $P(D)$ a \emph{cared edge} in $P(D)$
if the edge belongs to the competition graph $C(D)$ but not to the underlying graph $U(D)$.
For a cared edge $xy$ in $P(D)$,
there is a common out-neighbor $v$ of $x$ and $y$ in $D$ by definition.
The vertex $v$ is called a \emph{vertex taking care of the edge $xy$}
and it is said that \emph{$xy$ is taken care of by $v$} or that \emph{$v$ takes care of $xy$}.
A vertex in $D$ is called an \emph{caring vertex}
if an edge of $P(D)$ is taken care of by the vertex.
For example, the edges $v_1v_7$, $v_7v_8$, and $v_2v_5$ of  $P(D)$ in Figure~\ref{fig:eg31}(c) are cared edges
and the vertices $v_6$, $v_9$, and $v_3$ are vertices taking care of $v_1v_7$, $v_7v_8$, and $v_2v_5$, respectively.

We call an acyclic digraph $D$ an \emph{$(i,j)$ digraph}
if each vertex of $D$ has indegree at most $i$ and outdegree at most $j$.
In the following, we study the structure of the phylogeny graph of a $(i,j)$ digraph.

\begin{Lem} \label{lem:7-0}
Given an $(i,j)$ digraph $D$,
there is no vertex that takes care of
more than $\frac{1}{2}i(i-1)$ edges in the phylogeny graph of $D$.
\end{Lem}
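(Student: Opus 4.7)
The plan is to prove this directly by counting pairs of in-neighbors. The key observation is that by the definition given immediately before the lemma, if a vertex $v$ takes care of an edge $xy$ in $P(D)$, then $v$ is a common out-neighbor of $x$ and $y$ in $D$, which is the same as saying $\{x,y\} \subseteq N_D^-(v)$, where $N_D^-(v)$ denotes the set of in-neighbors of $v$ in $D$. So every edge taken care of by $v$ is determined by an unordered pair of distinct vertices drawn from $N_D^-(v)$.

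First I would fix an arbitrary vertex $v \in V(D)$ and let $S_v$ denote the set of edges of $P(D)$ taken care of by $v$. Then I would define the map $\varphi : S_v \to \binom{N_D^-(v)}{2}$ sending each $xy \in S_v$ to the unordered pair $\{x,y\}$; this is well-defined by the observation above and is clearly injective, since an edge of $P(D)$ is determined by its endpoints. Hence $|S_v| \leq \binom{|N_D^-(v)|}{2}$.

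Next I would invoke the hypothesis that $D$ is an $(i,j)$ digraph, which gives $|N_D^-(v)| \leq i$. Since $\binom{n}{2}$ is monotone nondecreasing in $n$, this yields
\[
|S_v| \leq \binom{|N_D^-(v)|}{2} \leq \binom{i}{2} = \frac{1}{2} i(i-1),
\]
completing the argument. There is essentially no obstacle here: the lemma is a direct pigeonhole-style bound, and I would not expect any subtlety beyond writing out the injection and applying the indegree constraint.
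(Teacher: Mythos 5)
Your proposal is correct and is essentially the same argument as the paper's: both rest on the observation that every edge taken care of by $v$ has both endpoints among the at most $i$ in-neighbors of $v$, so there are at most $\binom{i}{2}=\frac{1}{2}i(i-1)$ such edges. The paper merely phrases this as a contradiction (more than $\binom{i}{2}$ edges would force at least $i+1$ in-neighbors), whereas you give the direct injection; the content is identical.
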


\begin{proof}
Suppose to the contrary that
there exists a vertex $x$ taking care of $t$ different edges for $t \geq \frac{1}{2}i(i-1) +1$.
Then these edges belong to the clique $K$ in the phylogeny graph of $D$ formed by the in-neighbors of $x$ in $D$.
Thus $K$ contains at least $i+1$ vertices. Hence the indegree of $x$ is greater than $i$,
which contradicts the hypothesis that $D$ is an $(i,j)$ digraph.
\end{proof}

\noindent
The following is a consequence of Lemma~\ref{lem:7-0} when $(i,j)=(2,2)$.

\begin{Cor} \label{lem:7}
Given a $(2,2)$ digraph $D$,
there is no vertex that takes care of more than one cared edge in the phylogeny graph of $D$.
\end{Cor}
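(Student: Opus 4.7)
The plan is to derive this as an immediate specialization of Lemma~\ref{lem:7-0}. Setting $(i,j)=(2,2)$ in the bound $\frac{1}{2}i(i-1)$ yields $\frac{1}{2}\cdot 2 \cdot 1 = 1$, so Lemma~\ref{lem:7-0} states precisely that no vertex of $D$ takes care of more than one cared edge in $P(D)$. Thus the corollary follows by quoting the lemma and performing this one-line arithmetic.

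If a self-contained argument is preferred, I would re-run the proof of Lemma~\ref{lem:7-0} in the specific case $i=2$. Suppose for contradiction that some vertex $x$ of $D$ takes care of two distinct cared edges $e_1$ and $e_2$ of $P(D)$. By the definition of "taking care of," both endpoints of $e_1$ and both endpoints of $e_2$ are in-neighbors of $x$ in $D$. Since $e_1 \neq e_2$, the union of their four endpoints contains at least three distinct vertices, all of which are in-neighbors of $x$. Hence the indegree of $x$ is at least $3$, contradicting the hypothesis that $D$ is a $(2,2)$ digraph.

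There is no real obstacle here; the only care required is to note that "more than one cared edge" means $t \geq 2$, which matches the threshold $t \geq \frac{1}{2}i(i-1)+1 = 2$ that triggers the contradiction in the proof of Lemma~\ref{lem:7-0} when $i=2$.
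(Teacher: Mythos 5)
Your proposal is correct and matches the paper exactly: the paper states this corollary as the immediate specialization of Lemma~\ref{lem:7-0} to $(i,j)=(2,2)$, which is your first paragraph, and your optional self-contained argument is just the lemma's proof with $i=2$ made explicit. Nothing further is needed.
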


\begin{Lem} \label{lem:8-0}
Given an $(i,j)$ digraph $D$,
there is no vertex that is incident to more than
$\frac{1}{2}i(i-1)j$ distinct cared edges in the phylogeny graph of $D$.
\end{Lem}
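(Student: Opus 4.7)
The plan is to fix an arbitrary vertex $v$ in $D$ and bound the number of cared edges of $P(D)$ that are incident to $v$ by a double-counting argument that leverages Lemma~\ref{lem:7-0}.

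First I would observe the key structural fact: if $vw$ is a cared edge of $P(D)$, then by definition $v$ and $w$ have a common out-neighbor in $D$, say $u$, so that $u$ takes care of $vw$. In particular, every cared edge of $P(D)$ incident to $v$ is taken care of by some out-neighbor of $v$. Hence the set of cared edges incident to $v$ is contained in the union, over all out-neighbors $u$ of $v$, of the edges taken care of by $u$.

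Next I would quantify the two pieces. Since $D$ is an $(i,j)$ digraph, $v$ has at most $j$ out-neighbors. By Lemma~\ref{lem:7-0}, each such out-neighbor $u$ takes care of at most $\tfrac{1}{2}i(i-1)$ edges of $P(D)$. Summing the upper bounds over the at most $j$ out-neighbors of $v$ yields that the total number of edges of $P(D)$ taken care of by some out-neighbor of $v$ is at most $\tfrac{1}{2}i(i-1)j$, and therefore so is the number of cared edges of $P(D)$ incident to $v$.

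I do not anticipate a real obstacle here: the statement is essentially a corollary of Lemma~\ref{lem:7-0} combined with the outdegree bound, and the only subtlety is the trivial observation that possible double-counting (a cared edge being taken care of by more than one out-neighbor of $v$) only strengthens the inequality, so the union bound used above is valid as stated.
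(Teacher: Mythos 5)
Your proposal is correct and matches the paper's argument: the paper likewise takes the distinct caring vertices of the cared edges at $x$, notes they are out-neighbors of $x$ (hence at most $j$ of them), and applies Lemma~\ref{lem:7-0} to bound each one's contribution by $\frac{1}{2}i(i-1)$. Your remark that double-counting only helps is the same point the paper handles by passing from the $t$ edges to the $s \le j$ distinct caring vertices.
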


\begin{proof}
Take a vertex $x$ incident to at least one cared edge and let $e_1, \ldots, e_{t}$
be the cared edges in the phylogeny graph $P(D)$ of $D$ incident to $x$,
where $t$ is a positive integer.
Let $w_1, \ldots, w_{s}$ be distinct vertices in $D$
taking care of $e_1, \ldots, e_{t}$, where $s$ is a positive integer.
Then $w_1, \ldots, w_{s}$ are out-neighbors of $x$, so $s \le j$.
By Lemma~\ref{lem:7-0}, each of the vertices $w_1, \ldots, w_{s}$ can take care of at most $\frac{1}{2}i(i-1)$ edges in $P(D)$.
Therefore $t \le \frac{1}{2}i(i-1)s \leq \frac{1}{2}i(i-1)j$ and thus the lemma holds.
\end{proof}

\noindent
The following is a consequence of Lemma~\ref{lem:8-0} when $(i,j)=(2,2)$.

\begin{Cor}\label{lem:8}
Given a $(2,2)$ digraph $D$,
there is no vertex that is incident to three cared edges in the phylogeny graph of $D$.
\end{Cor}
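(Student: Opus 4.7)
The plan is to observe that this corollary is an immediate specialization of Lemma~\ref{lem:8-0} to the case $(i,j)=(2,2)$. First I would substitute $i=2$ and $j=2$ into the bound $\frac{1}{2}i(i-1)j$ appearing in Lemma~\ref{lem:8-0}, obtaining $\frac{1}{2}\cdot 2 \cdot 1 \cdot 2 = 2$. Thus Lemma~\ref{lem:8-0} already tells us that, in a $(2,2)$ digraph, every vertex of $D$ is incident to at most two cared edges in $P(D)$, which is exactly the statement to prove (contrapositive of ``no vertex is incident to three cared edges'').

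Since there is essentially nothing to do beyond arithmetic, there is no real obstacle; the only thing worth mentioning in the write-up is why Lemma~\ref{lem:8-0} can be applied, namely that a $(2,2)$ digraph is by definition an $(i,j)$ digraph with $i=j=2$. I would therefore keep the proof to a single sentence, perhaps with a brief reminder that each out-neighbor of $x$ can take care of at most one cared edge by Corollary~\ref{lem:7}, and that $x$ has at most two out-neighbors, giving at most $1\cdot 2 = 2$ cared edges incident to $x$; this makes the corollary self-contained and transparent without re-deriving Lemma~\ref{lem:8-0}.
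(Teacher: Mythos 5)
Your proposal is correct and matches the paper exactly: the paper states this corollary as an immediate consequence of Lemma~\ref{lem:8-0} with $(i,j)=(2,2)$, where the bound $\frac{1}{2}i(i-1)j$ evaluates to $2$. Your optional unpacking via Corollary~\ref{lem:7} and the outdegree bound is just the specialization of the paper's proof of Lemma~\ref{lem:8-0}, so nothing is missing.
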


%%%%%%%%%%%%%%%%%%%%%%%%%%%%%%%%%%%%%%%%%%%%%%%%%%%%%%%%%%%%%%%%%%%%%%%%%%%%%%%%%%%
\section{Main Results}
%%%%%%%%%%%%%%%%%%%%%%%%%%%%%%%%%%%%%%%%%%%%%%%%%%%%%%%%%%%%%%%%%%%%%%%%%%%%%%%%%%%
%%%%%%%%%%%%%%%%%%%%%%%%%%%%%%%%%%%%%%%%%%%%%%%%%%%%%%%%%%%%%%%%%%%%%%%%%%%%%%%%%%%
\subsection{The phylogeny graphs of $(2,2)$ digraphs are $K_5$-free}
%%%%%%%%%%%%%%%%%%%%%%%%%%%%%%%%%%%%%%%%%%%%%%%%%%%%%%%%%%%%%%%%%%%%%%%%%%%%%%%%%%%

In this subsection,
we show that there is no $(2,2)$ digraph
whose phylogeny graph contains a complete graph $K_n$ for $n \geq 5$.

Note that we can construct a $(2,2)$ digraph whose phylogeny graph contains
a complete graph $K_4$ as a subgraph
as shown in Example~\ref{eg:1}.

\begin{Exam} \label{eg:1}
Let $D$ be a digraph defined by
$V(D) = \{ v_1, v_2, v_3, v_4, v_5 \}$ and
$A(D) = \{ (v_1, v_2),$ $(v_1, v_4),$ $(v_2, v_3),$ $(v_2, v_5), (v_3, v_4), (v_4, v_5) \}$
(see Figure~\ref{fig:eg17}(a)).
Then $D$ is a $(2,2)$ digraph, and
the subgraph of $P(D)$ induced by $\{ v_1, v_2, v_3, v_4 \}$
is isomorphic to $K_4$
(see Figure~\ref{fig:eg17}(b)).
\end{Exam}

\begin{figure}
\psfrag{a}{(a)}
\psfrag{b}{(b)}
\centering{
\includegraphics[height=3cm]{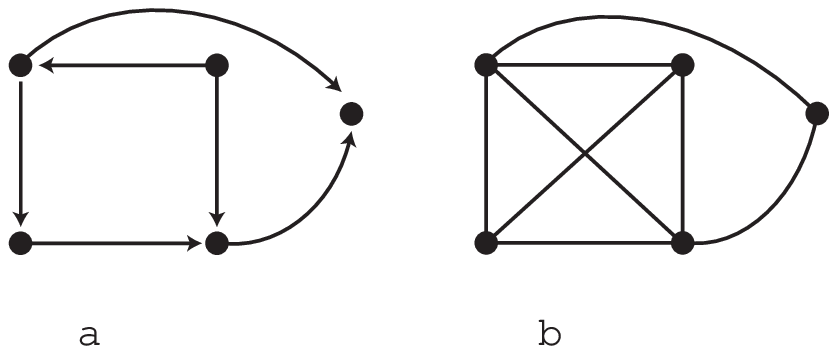}}
\caption{(a) A $(2,2)$ digraph $D$, (b) The phylogeny graph of $D$ contains $K_4$}
\label{fig:eg17}
\end{figure}

\begin{Thm}\label{thm:18}
For any $(2,2)$ digraph $D$,
the phylogeny graph of $D$ is $K_5$-free.
\end{Thm}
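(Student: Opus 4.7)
The plan is to argue by contradiction. Suppose $P(D)$ contains a $K_5$, and let $\{u_1,\ldots,u_5\}$ be the vertex set of such a copy. Since $D$ is acyclic, the restriction of $D$ to these five vertices admits a topological ordering, and I relabel them $v_1,\ldots,v_5$ so that every arc of $D$ between them runs from a lower- to a higher-indexed vertex. The key first observation concerns $v_1$: every one of its four edges in the $K_5$ must be either an out-arc of $v_1$ in $D$ or a cared edge, since $v_1$ has no in-arcs inside the $K_5$. Combining the $(2,2)$ assumption (outdegree at most $2$) with Corollary~\ref{lem:8} (at most $2$ cared edges at any vertex), I deduce that exactly two of these four edges are out-arcs and exactly two are cared. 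Write $a,b$ for the two out-neighbours of $v_1$ in $K_5$ (these then exhaust the out-neighbours of $v_1$ in $D$) and $c,d$ for the remaining two vertices of $K_5$, so that $v_1c$ and $v_1d$ are the cared edges at $v_1$.

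Next, the caring vertex of each of $v_1c,v_1d$ must be a common out-neighbour of $v_1$ with the other endpoint, hence belongs to $\{a,b\}$; and by Corollary~\ref{lem:7} these two caring vertices must be distinct, so after renaming I may assume $D$ contains the arcs $c\to a$ and $d\to b$. Topological forwardness of these arcs forces both $a$ and $b$ to have index at least $3$, so $\{a,b\}\subseteq\{v_3,v_4,v_5\}$ and $v_2\in\{c,d\}$, leaving only three possibilities for $\{a,b\}$.

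A case analysis on $\{a,b\}$ then finishes the proof. The pair $\{v_3,v_4\}$ dies immediately because it would require an arc from $v_5$ into $\{v_3,v_4\}$, which is backward. For $\{a,b\}=\{v_3,v_5\}$ the forced arcs together with $v_1\to v_3$ and $v_1\to v_5$ saturate the indegrees of $v_3$ and $v_5$; the indegree bound then rules out either arc interpretation of $v_2v_5$, $v_3v_5$, and $v_2v_4$, forcing each of them to be a cared edge, and chasing their caring vertices leads to a single out-neighbour of $v_5$ outside $K_5$ that would have to take care of two cared edges, contradicting Corollary~\ref{lem:7}. For $\{a,b\}=\{v_4,v_5\}$ a further split on which of $v_4,v_5$ is the caring vertex of $v_1v_2$ and of $v_1v_3$ gives two subcases; in each one the indegrees of $v_4$ and $v_5$ are saturated and enough of the remaining edges are forced to be cared that $v_4$ ends up incident to three cared edges, contradicting Corollary~\ref{lem:8}. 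The main obstacle is the bookkeeping in these last two cases: once the initial arcs are in place, one must repeatedly combine the topological-forwardness restriction with the saturated indegrees to show that every remaining edge must be cared rather than an arc, and then isolate either a vertex incident to three cared edges or a single caring vertex that is doubly used.
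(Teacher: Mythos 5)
Your opening matches the paper's argument almost exactly: assume a $K_5$, pick a source $v_1$ of the induced subdigraph, use Corollary~\ref{lem:8} and the outdegree bound to force exactly two out-arcs and two cared edges at $v_1$, and use Corollary~\ref{lem:7} to get two distinct caring vertices among the two out-neighbours. Up to that point the proposal is sound, and your extra device of a full topological order is harmless (it even simplifies some steps, e.g.\ it kills the $\{v_3,v_4\}$ case instantly and makes both $v_3v_4$ and $v_2v_5$ cared at once in the main case, where the paper only gets ``at least one of them is cared'' via a directed-cycle argument).

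The gap is in the endgame of the two surviving cases: the contradictions you announce are not the ones that actually occur. In the case $\{a,b\}=\{v_4,v_5\}$, once the forced arcs and forced cared edges are written down, \emph{every} vertex of the $K_5$ is incident to exactly two cared edges and two underlying arcs (e.g.\ with arcs $v_1\to v_4$, $v_1\to v_5$, $v_2\to v_4$, $v_3\to v_5$, $v_2\to v_3$, the cared edges are $v_1v_2$, $v_1v_3$, $v_3v_4$, $v_2v_5$, $v_4v_5$, giving each vertex precisely two). So $v_4$ never ``ends up incident to three cared edges'' and Corollary~\ref{lem:8} cannot close this case. The actual obstruction, which is what the paper uses, is the nonexistence of a caring vertex for $v_2v_5$: its caring vertex must be a common out-neighbour of $v_2$ and $v_5$, but the out-neighbours of $v_2$ are exhausted by $v_3$ and $v_4$, and neither can be an out-neighbour of $v_5$ ($v_5$ already has $v_3$ as an in-neighbour, and $v_4v_5$ being a cared edge means $v_4$ and $v_5$ are not adjacent in $U(D)$). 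Similarly, in the case $\{a,b\}=\{v_3,v_5\}$ your claim that ``the indegree bound rules out either arc interpretation of $v_2v_4$'' is false: $v_4$ has no in-neighbours forced yet, so $v_2\to v_4$ is perfectly consistent with the indegree bound; one must instead argue that if $v_2\to v_4$ is an arc then $v_2$'s outdegree is saturated by $v_3,v_4$ and the cared edge $v_2v_5$ again has no possible caring vertex, while if $v_2v_4$ is cared then $v_2$ carries three cared edges. Both cases do close, but only after replacing your announced contradictions with this caring-vertex-unavailability argument; as written, the decisive steps would fail.
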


\begin{proof}
Suppose to the contrary that
there is a $(2,2)$ digraph $D$ whose phylogeny graph $P(D)$ contains $K_5$ as a subgraph.
Let $v_1,v_2,v_3,v_4,v_5$ be the vertices of $K_5$.
Let $D_1$ be the subdigraph of $D$ induced by $\{v_1,\dots , v_5\}$.
Since $D_1$ is acyclic, there is a vertex of indegree $0$ in $D_1$.
Without loss of generality, we may assume that $v_1$ has indegree $0$.
Now consider the edges $v_1v_2$,$v_1v_3$,$v_1v_4$,$v_1v_5$ in $P(D)$.
At most two of them  are cared edges by Corollary~\ref{lem:8}.
Therefore, at least two of $v_1v_2$,$v_1v_3$,$v_1v_4$,$v_1v_5$ belong to $U(D)$.
By the way, they must be arcs outgoing from $v_1$ since $v_1$ has indegree $0$ in $D_1$.
Since $v_1$ has outdegree at most two in $D$, $v_1$ has outdegree exactly two in $D$.
Therfore exactly two of $v_1v_2$,$v_1v_3$,$v_1v_4$,$v_1v_5$ belong to $U(D)$ and, consequently,
the remaining two edges are cared edges in $P(D)$.

Without loss of generality, we may assume that $v_1v_2$ and $v_1v_3$ are cared edges.
Then $v_4$ and $v_5$ are the out-neighbors of $v_1$ in $D$.
Since a vertex taking care of $v_1v_2$ (resp.\ $v_1v_3$) is an out-neighbor of
$v_1$, $v_1v_2$ (resp.\ $v_1v_3$) is taken care of by $v_4$ or $v_5$.
Without loss of generality, we may assume that $v_4$ is a common out-neighbor of $v_1$ and $v_2$ in $D$.
Then, by Corollary~\ref{lem:7}, $v_5$ is a common out-neighbor of $v_1$ and $v_3$.
However, since both $v_4$ and $v_5$ have indegree two, the edge $v_4v_5$ cannot belong to $U(D)$ and so is a cared edge in $P(D)$.
Since $(v_1,v_4)$, $(v_2,v_4)$, $(v_1,v_5)$, and $(v_3,v_5)$ are arcs of $D$,
none of $v_1$, $v_2$, $v_3$ is a common out-neighbor of $v_4$ and $v_5$ by acyclicity of $D$.
Therefore, the edge $v_4v_5$ is taken care of by a vertex $w_1$ distinct from $v_1,v_2,v_3$ (see Figure~\ref{fig:eg12}).

\begin{figure}
\psfrag{1}{$v_1$}
\psfrag{2}{$v_2$}
\psfrag{3}{$v_3$}
\psfrag{4}{$v_4$}
\psfrag{5}{$v_5$}
\psfrag{w}{$w_1$}
\centering{
\includegraphics[height=3.5cm]{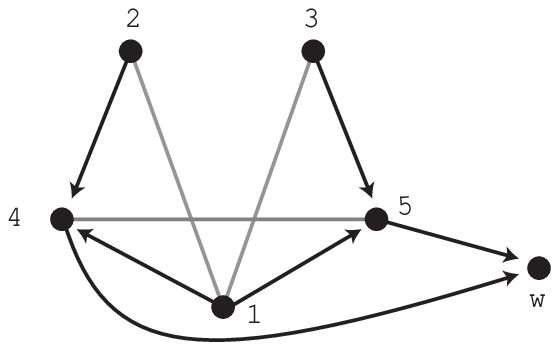}}
\caption{The vertex $w_1$ taking care of $v_4v_5$. The grey edges are cared edges. }
\label{fig:eg12}
\end{figure}

We show that at least one of $v_3v_4,v_2v_5$ is a cared edge.
Suppose to the contrary that
both $v_3v_4$ and $v_2v_5$ are not cared edges.
Then $v_3v_4$ and $v_2v_5$ are inherited from arcs $(v_4,v_3)$ and $(v_5,v_2)$, respectively,
since the indegrees of $v_4$ and $v_5$ are at most two.
Therefore $v_2 \rightarrow v_4 \rightarrow v_3 \rightarrow v_5 \rightarrow v_2$ is a directed cycle,
which contradicts the hypothesis that $D$ is acyclic.
Thus at least one of $v_3v_4,v_2v_5$ is a cared edge.
Without loss of generality, we may assume that $v_3v_4$ is a cared edge.
Since none of $v_2$ and $v_5$ can be an out-neighbor of $v_4$, neither $v_2$ nor $v_5$ takes care of $v_3v_4$.
Since the indegree of $v_1$ is $0$, $v_1$ does not take care of $v_3v_4$ either.
Since $w_1$ has in-neighbors $v_4$ and $v_5$, $w_1$ cannot take care of the edge $v_3v_4$ due to the degree condition imposed on $D$.
Therefore the edge $v_3v_4$ is taken care of by a vertex $w_2$ distinct from $v_1,v_2,v_5, w_1$.
Recall that $v_3v_4$ and $v_3v_1$ are cared edges.
Since $v_3$ cannot be incident to three cared edges by Corollary~\ref{lem:8}, the edge $v_2v_3$ belongs to $U(D)$.
Then $v_2$ is an in-neighbor of $v_3$ since the outdegree of $v_3$ is at most two.

Finally we take a look at the edge $v_2v_5$.
Since the indegree of $v_5$ is two that is achieved by $v_1$ and $v_3$, $v_2$ cannot be an in-neighbor of $v_5$.
If $v_2$ is an out-neighbor of $v_5$, then it results in the directed cycle $v_2\to v_3\to v_5\to v_2$, which is a contradiction.
Therefore $v_2v_5$ is a cared edge. Since $v_3$ and $v_4$ are the only out-neighbors of $v_2$, $v_3$ or $v_4$ takes care of $v_2v_5$.
Since $v_5$ is an out-neighbor of $v_3$, $v_3$ cannot take care of the edge $v_2v_5$.
Since the edge $v_4v_5$ is a cared edge, $v_4$ cannot take care of the edge $v_2v_5$ either.
Hence we have reached a contradiction.
\end{proof}

%%%%%%%%%%%%%%%%%%%%%%%%%%%%%%%%%%%%%%%%%%%%%%%%%%%%%%%%%%%%%%%%%%%%%%%%%%%%%%%%%%%
\subsection{A necessary condition for the phylogeny graph of a $(2,2)$ digraph being chordal}
%%%%%%%%%%%%%%%%%%%%%%%%%%%%%%%%%%%%%%%%%%%%%%%%%%%%%%%%%%%%%%%%%%%%%%%%%%%%%%%%%%%

Properties of $(2,2)$ digraphs make us speculate that sufficiently long hole in the underlying graph of a $(2,2)$ digraph $D$
might give rise to a hole in the phylogeny graph of $D$ as chords cannot be produced enough to fill in it.
This motivates us to find the length of a shortest hole among holes in the underlying graph of a $(2,2)$ digraph whose phylogeny graph is chordal.

\begin{Lem}\label{lem:subgraph}
Let $D$ be a $(2,2)$ digraph.
If the underlying graph of $D$ contains a hole $H$ of length at least $7$,
then the subgraph of the phylogeny graph of $D$ induced by $V(H)$ is not chordal.
\end{Lem}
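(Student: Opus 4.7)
I argue by contradiction: suppose $G':=P(D)[V(H)]$ is chordal. Since each vertex of $V(H)$ has $H$-degree $2$ in $G'$ and by Corollary~\ref{lem:8} at most two cared edges incident to it, $G'$ has maximum degree at most $4$. My plan is to produce a $W$-configuration inside $G'$ and then analyze its arcs in $D$ to derive a contradiction.

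I first show that $G'$ contains a cycle of length exactly $7$. If $n=7$, the hole $H$ itself works. For $n>7$, I iterate Proposition~\ref{prop:5}: given a cycle $C$ of length $m>7$ in $G'$, the proposition supplies a vertex $u$ on $C$ opposite to some chord $xy$ of $C$, and replacing the subpath $x\,u\,y$ of $C$ by the edge $xy$ produces a cycle of length $m-1$ in $G'$; iterating until the length is $7$ yields the desired $7$-cycle. Proposition~\ref{prop:6} then supplies a $W$-configuration $W\subseteq G'$ on seven vertices $v_1,\dots,v_7\in V(H)$ with $v_iv_j\in E(W)$ whenever $1\le i<j\le 7$ and $j-i\le 2$. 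The middle vertices $v_3,v_4,v_5$ have $W$-degree $4$; since $G'$ has maximum degree $4$, each of them has $G'$-degree exactly $4$ and its $G'$-neighborhood coincides with its $W$-neighborhood. Consequently each of $v_3,v_4,v_5$ is incident to exactly two cared edges of $G'$, and by Corollary~\ref{lem:7} these two cared edges must be taken care of by two distinct out-neighbors, so $v_3,v_4,v_5$ each have out-degree exactly $2$ in $D$.

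The remaining step, and the main obstacle, is a case analysis on the arc structure. A preliminary observation is that any chord $u_iu_j$ of $H$ taken care of by a vertex $w\in V(H)$ must satisfy $|i-j|=2$ with $w$ the $H$-middle of $u_i$ and $u_j$: since $H$ is induced in $U(D)$, the arcs $u_i\to w$ and $u_j\to w$ force $w$ to be $H$-adjacent to both $u_i$ and $u_j$. Thus for each of the six out-neighbors of $\{v_3,v_4,v_5\}$, either it lies in $V(H)$, in which case it is forced to be one of the two $H$-neighbors of the relevant $v_i$, the corresponding cared edge is a skip-$2$ chord of $H$, and two $H$-edges receive prescribed orientations; or it lies outside $V(H)$. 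Splitting on these six choices together with the relative cyclic positions of $v_3,v_4,v_5$ on $H$, I would in each resulting configuration trace the forced $H$-edge orientations and chord-caring assignments. The expected outcome in every case is one of the following violations: some vertex of $V(H)$ is forced to have in- or out-degree exceeding $2$ (typically via a forced $2$-cycle between two of the $v_i$, contradicting acyclicity of $D$); two distinct cared edges are forced to share a caring vertex (contradicting Corollary~\ref{lem:7}); or the forced chords create an induced cycle of length at least $4$ in $G'$ (for instance three skip-$2$ $W$-chords plus one further edge on the four vertices $v_1,v_3,v_5,v_7$), contradicting chordality. The essential combinatorial levers throughout are Corollary~\ref{lem:7} and the preliminary observation about inside-cared chords; the main obstacle is the bookkeeping needed to treat the many sub-cases uniformly.
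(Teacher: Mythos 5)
Your first half reproduces the paper's strategy: assume $G'=P(D)[V(H)]$ is chordal, bound its maximum degree by $4$ via Corollary~\ref{lem:8}, extract a $7$-cycle using Proposition~\ref{prop:5}, and invoke Proposition~\ref{prop:6} to obtain a $W$-configuration; your shortcutting of $x\,u\,y$ to $xy$ is equivalent to the paper's vertex deletion. Your observations that the three middle vertices of the $W$-configuration have $G'$-degree exactly $4$, hence are incident to exactly two cared edges each and have outdegree exactly $2$ in $D$, are also correct and appear in the paper. The problem is the second half: the contradiction is never actually derived. You announce a case analysis over which of the six out-neighbors lie on $H$ and over the relative cyclic positions of $v_3,v_4,v_5$, and then state only that ``the expected outcome in every case'' is a violation. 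That is a plan, not a proof; the configurations are numerous (for instance, the two cared edges at $v_4$ could a priori be $v_4v_2$ and $v_4v_6$ with $v_4v_3,v_4v_5$ both being $H$-edges, and nothing you have established rules this out locally), and it is precisely this bookkeeping that constitutes the difficulty of the lemma.

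The paper closes the argument without any such case analysis by exploiting information you discard: when the $7$-cycle is produced by repeatedly removing (or shortcutting) a vertex opposite to a chord, the seven surviving vertices $v_{n_1},\dots,v_{n_7}$ occur on the resulting Hamiltonian cycle in the same cyclic order as on $H$, with $n_1<\dots<n_7$. Since an edge $v_mv_k$ lies on $H$ exactly when $|m-k|\in\{1,n-1\}$, every chord of that surviving Hamiltonian cycle has index difference at least $2$ and different from $n-1$, hence is a cared edge. This identifies all four chords of the $W$-configuration's Hamiltonian cycle as cared edges, two at each of two adjacent vertices; their four caring vertices are distinct by Corollary~\ref{lem:7} and exhaust the out-neighborhoods of those two vertices, so no arc can join them, forcing the edge between them to be a fifth cared edge and giving a vertex incident to three cared edges, contradicting Corollary~\ref{lem:8}. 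To repair your proof you would either need to carry your order-preservation through the shortcutting step (which your construction does preserve, but you never record or use it) or genuinely complete the case analysis you only sketch.
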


\begin{proof}
Let $H = v_1v_2\cdots v_nv_1$ ($n \ge 7$) be a hole of length at least $7$ in $U(D)$
and let $G_1$ be the subgraph of $P(D)$ induced by $V(H) = \{v_1, \ldots ,v_n\}$.
Note that no edges on $H$ are cared edges while the edges of $G_1$ not on $H$ are cared edges in $P(D)$.
Suppose to the contrary that $G_1$ is chordal.
If there exists a vertex $v$ with $\deg_{G_1}(v)\geq 5$, then $v$ is incident to at least three cared edges in $G_1$ and so in $P(D)$,
which contradicts Corollary~\ref{lem:8}.
Therefore $d_{G_1}(v)\leq 4$ for every vertex $v$ in $G_1$.

Since $G_1$ is a hamiltonian chordal graph with at least seven vertices, there exists a vertex on $G_1$ opposite to a chord by Proposition~\ref{prop:5}.
Without loss of generality, we may assume that $v_2$ is such a vertex.
Let $G_2$ be the graph obtained by deleting $v_2$ from $G_1$.
Since $v_2$ is a vertex opposite to a chord, $G_2$ is a hamiltonian chordal graph with $n-1$ vertices with a hamiltonian cycle $v_1v_3v_4\cdots v_nv_1$.
Since $n-1 \geq 6$, we may apply Proposition~\ref{prop:5} again to have a vertex on $G_2$ which is opposite to a chord.
We delete one of such vertices from $G_2$ to obtain a hamiltonian chordal graph with $n-2$ vertices.
We continue this process until we obtain a hamiltonian chordal graph $G^*$ with $7$ vertices.
Let $v_{n_1}v_{n_2}\cdots v_{n_7}v_{n_1}$ be a hamiltonian cycle of $G^*$ with $n_1< n_2< \cdots < n_7$,
which exists by the definition of $G^*$.
By Proposition~\ref{prop:6}, $G^*$ is a $W$-configuration.
Without loss of generality, we may assume that it is labeled as in Figure~\ref{fig:eg7}.
\begin{figure}
\psfrag{a}{(a)}
\psfrag{b}{(b)}
\psfrag{1}{$v_{n_1}$}
\psfrag{2}{$v_{n_2}$}
\psfrag{3}{$v_{n_3}$}
\psfrag{4}{$v_{n_4}$}
\psfrag{5}{$v_{n_5}$}
\psfrag{6}{$v_{n_6}$}
\psfrag{7}{$v_{n_7}$}
\centering{
\includegraphics[height=4cm]{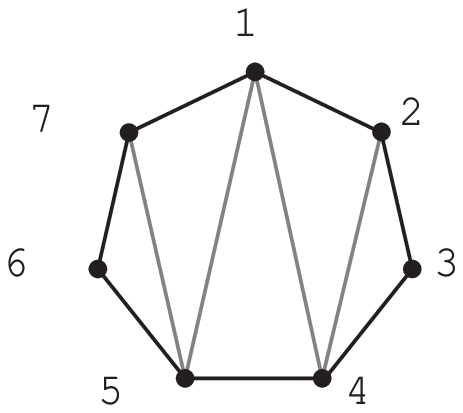}}
\caption{A $W$-configuration of $G^*$. The grey edges are cared edges.}
\label{fig:eg7}
\end{figure}

If the end vertices of an edge in $G^*$ are not on the hamiltonian cycle $v_{n_1}v_{n_2}\cdots v_{n_7}v_{n_1}$,
then the index difference of them is neither $1$ nor $n-1$ since $n_1< n_2< \cdots < n_7$.
Noting that an edge $v_mv_k$ $(1\leq m,k \leq n)$ is on $H$ if and only if $|m-k|=1$ or $|m-k|=n-1$,
we may conclude that $v_{n_4}v_{n_1}$,  $v_{n_4}v_{n_2}$, $v_{n_5}v_{n_1}$, and $v_{n_5}v_{n_7}$ are cared edges.
Let $w_1$, $w_2$, $w_3$, $w_4$ be caring vertices of the edges $v_{n_4}v_{n_1}$, $v_{n_4}v_{n_2}$, $v_{n_5}v_{n_1}$, $v_{n_5}v_{n_7}$, respectively.
Then $w_1$, $w_2$, $w_3$, $w_4$ are all distinct by Corollary~\ref{lem:7},
and $w_1$, $w_2$ (resp.\ $w_3$, $w_4$) are out-neighbors of $v_{n_4}$ (resp.\ $v_{n_5}$).
Since $D$ is a $(2,2)$ digraph, there cannot exist an arc between $v_{n_4}$ and $v_{n_5}$.
Therefore $v_{n_4}v_{n_5}$ should be a cared edge.
Then $v_{n_4}$ is incident to three cared edges, which contradicts Lemma~\ref{lem:8}.
Hence $G_1$ is not chordal.
\end{proof}

\noindent By Lemma~\ref{lem:subgraph} and Proposition~\ref{prop:3}, the following theorem holds.

\begin{Thm}\label{thm:9}
Let $D$ be a $(2,2)$ digraph.
If the underlying graph of $D$ contains a hole of length at least $7$,
then the phylogeny graph of $D$ is not a chordal graph.
\end{Thm}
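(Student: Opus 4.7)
The plan is to derive this theorem as an immediate consequence of the two already-established results: Lemma~\ref{lem:subgraph} and Proposition~\ref{prop:3}. The key observation is that Lemma~\ref{lem:subgraph} produces a non-chordal induced subgraph of $P(D)$ (namely $P(D)[V(H)]$) whenever $U(D)$ contains a long enough hole, and Proposition~\ref{prop:3} forbids a chordal graph from having such an induced subgraph.

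Concretely, I would argue by contraposition. Assume $U(D)$ contains a hole $H$ of length at least $7$. Apply Lemma~\ref{lem:subgraph} to conclude that the subgraph $P(D)[V(H)]$ of $P(D)$ induced by the vertex set of $H$ is not chordal. Now suppose for contradiction that $P(D)$ itself were chordal. Then by Proposition~\ref{prop:3}, every induced subgraph of $P(D)$ would be chordal, which contradicts the non-chordality of $P(D)[V(H)]$. Hence $P(D)$ must fail to be chordal.

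There is no real obstacle here, since all of the combinatorial content has been absorbed into Lemma~\ref{lem:subgraph} (the $W$-configuration argument together with the degree bounds coming from Corollaries~\ref{lem:7} and~\ref{lem:8}). The present theorem is simply the packaging of that lemma with the elementary fact that chordality is hereditary under taking induced subgraphs, and this is exactly the two-line proof that the text advertises after the statement of Lemma~\ref{lem:subgraph}.
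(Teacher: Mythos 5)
Your proposal is correct and matches the paper exactly: the theorem is stated with the remark that it follows from Lemma~\ref{lem:subgraph} together with Proposition~\ref{prop:3}, which is precisely the contradiction-via-hereditary-chordality argument you give.
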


%%%%%%%%%%%%%%%%%%%%%%%%%%%%%%%%%%%%%%%%%%%%%%%%%%%%%%%%%%%%%%%%%%%%%%%%%%%%%%%%%%%
\subsection{Forbidden subdigraphs for the class of $(2,2)$ digraphs whose phylogeny graphs are chordal}
%%%%%%%%%%%%%%%%%%%%%%%%%%%%%%%%%%%%%%%%%%%%%%%%%%%%%%%%%%%%%%%%%%%%%%%%%%%%%%%%%%%

Let $\mathcal{D}$ be a class of digraphs.
A digraph $D_0$ is called
a \emph{forbidden subdigraph} for $\mathcal{D}$ if
$D \not\in \mathcal{D}$ holds
for any digraph $D$ containing $D_0$ as an induced subdigraph.
Note that, by Lemma~\ref{lem:subgraph}, the orientations of cycles of length at least $7$ are forbidden subdigraphs for the class $\mathcal{D}^*$ of $(2,2)$ digraphs
whose phylogeny graphs are chordal. In this subsection,
we determine the non-isomorphic orientations of cycles of length $4$ or $5$ or $6$ that are forbidden subdigraphs for $\mathcal{D}^*$.

\begin{figure}
\psfrag{a}{(a)}
\psfrag{b}{(b)}
\psfrag{c}{(c)}
\psfrag{d}{(d)}
\psfrag{e}{(e)}
\centering{
\includegraphics[height=7cm]{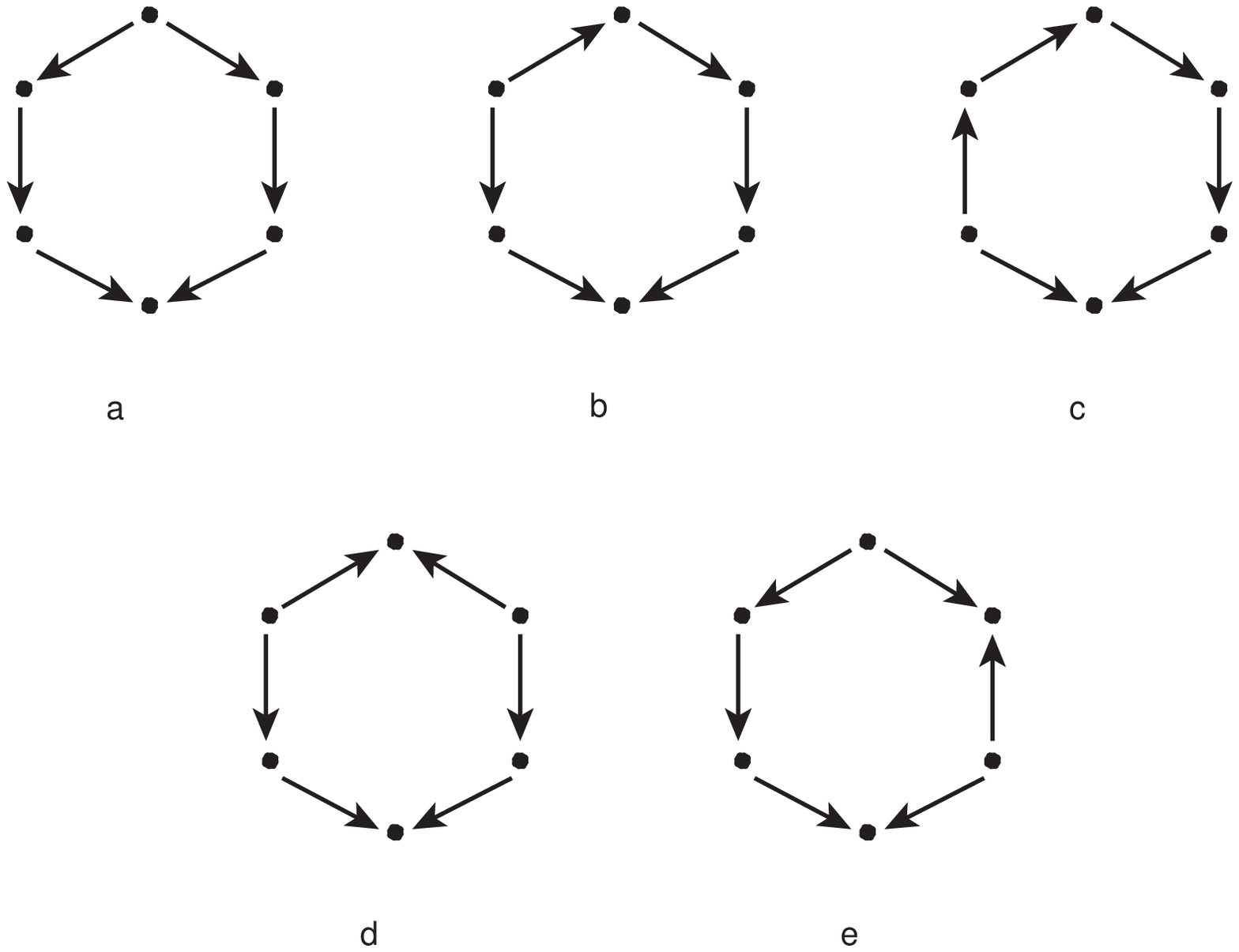}}
\caption{The forbidden subdigraphs among orientations of cycles of length at most six
for the class of $(2,2)$ digraphs whose phylogeny graphs are chordal.}
\label{fig:forbidden}
\end{figure}

\begin{Thm}
Let $\mathcal{D}^*$ be the class of $(2,2)$ digraphs
whose phylogeny graphs are chordal.
Then the digraphs given in Figure~\ref{fig:forbidden} are the forbidden subdigraphs
among orientations of cycles of length at most six for $\mathcal{D}^*$.
\end{Thm}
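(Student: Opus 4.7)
The plan is an exhaustive case analysis over the non-isomorphic acyclic orientations of $C_n$ for $n \in \{3,4,5,6\}$. For each such orientation $D_0$, I will either verify that $D_0$ is forbidden for $\mathcal{D}^*$ -- by showing that in every $(2,2)$ digraph $D$ containing $D_0$ as an induced subdigraph, $P(D)[V(D_0)]$ contains a chordless cycle of length at least $4$ -- or else exhibit one concrete $(2,2)$ digraph $D \supseteq D_0$ (with $D_0$ induced) whose phylogeny graph is chordal.

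The key structural observation I will use throughout is the following. If $D$ is a $(2,2)$ digraph containing $D_0$ as an induced subdigraph, then every cared edge of $P(D)$ with both endpoints in $V(D_0)$ arises from some common out-neighbor $w$. When $w \in V(D_0)$, the inducedness of $D_0$ forces $w$ to already have indegree $2$ inside $D_0$, so the chord is \emph{forced} and entirely determined by $D_0$. When $w \notin V(D_0)$, both endpoints must have outdegree at most $1$ in $D_0$, and the chord is \emph{optional}, consuming one of the two external out-neighbor slots of each endpoint. Corollary~\ref{lem:7} ensures that each caring vertex accounts for at most one optional chord, while Corollary~\ref{lem:8} limits each vertex of $V(D_0)$ to being incident with at most two cared edges.

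The cases $n \leq 4$ are immediate. For $n = 3$, $U(D_0) = K_3$ is chordal. For $n = 4$, each of the three orientations has a local sink of indegree $2$ whose forced chord splits $C_4$ into two triangles, hence $P(D_0)$ is chordal and $D_0 \in \mathcal{D}^*$. For $n = 5$, the unique $2$-source orientation already has two forced chords that triangulate $C_5$; each $1$-source orientation has one forced chord leaving a single residual induced $4$-cycle on $V(D_0)$, and this $4$-cycle is closed by one optional chord obtained from a single external caring vertex that appears as a simplicial vertex in the resulting $P(D)$. Hence none of these five orientations is forbidden.

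The heart of the proof is the eight-class analysis of $n=6$. The three $1$-source classes (source--sink cyclic distance $1$, $2$, or $3$) and the two $2$-source classes with sources at cyclic distance $2$ on $C_6$ -- exactly the five digraphs shown in Figure~\ref{fig:forbidden} -- will be shown to be forbidden. In each $1$-source case the unique forced chord leaves a residual induced $5$-cycle on $V(D_0)$, and every potential chord of this $5$-cycle either requires an arc between vertices of $V(D_0)$ absent from $D_0$ (breaking inducedness) or needs a new out-arc from the unique source (which already has full outdegree in $D_0$); moreover, any compatible pair of the remaining optional chords still leaves a residual $4$-hole. In each forbidden $2$-source case the two forced chords leave a residual induced $4$-cycle both of whose potential chords are incident to a source of full outdegree, again impossible. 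For the remaining two $2$-source classes (sources at cyclic distance $3$) and the $3$-source class, I will exhibit a chordal extension using either $D_0$ itself or $D_0$ augmented by a single external simplicial caring vertex. The main obstacle is the $n=6$ bookkeeping: for each of the five forbidden orientations, simultaneously tracking out-neighbor slots, the inducedness of $D_0$, and Corollaries~\ref{lem:7}--\ref{lem:8} to prove that no combination of forced and optional chords can triangulate the residual hole.
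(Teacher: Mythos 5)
Your proposal is correct and follows essentially the same route as the paper's proof: inducedness forces every chord of $P(D)[V(D_0)]$ to be either a forced chord between the two in-neighbours of a sink of $D_0$ or a cared edge consuming a free out-neighbour slot of each endpoint, the five orientations of Figure~\ref{fig:forbidden} are eliminated because the residual $5$- or $4$-hole left by the forced chords cannot be triangulated under the outdegree-$2$ bound together with Corollaries~\ref{lem:7} and~\ref{lem:8}, and the remaining orientations are handled by explicit chordal witnesses exactly as in Figures~\ref{fig:4cycles}--\ref{fig:6cycles}. Your enumeration by the number of sources and their cyclic distances is just a more systematic packaging of the same case analysis.
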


\begin{proof}
Suppose that a $(2,2)$ digraph $D$ contains an  orientation $C$ of a cycle with length six given in Figure~\ref{fig:forbidden} as an induced subdigraph.
(We provided the chordal phylogeny graph of a $(2,2)$ digraph
containing each of orientations of cycles of length $4$ or $5$ or $6$ in Figures~\ref{fig:4cycles}, \ref{fig:5cycles}, \ref{fig:6cycles}
other than the ones given in Figure~\ref{fig:forbidden}.)
Let $S$ be the subgraph of the phylogeny graph $P(D)$ of $D$ induced by $V(C)$.
To reach a contradiction, suppose that $P(D)$ is chordal.
Then, in case of (a), (b), (c), the subgraph of $P(D)$ induced by the vertex set of a cycle $H_1$ of length five is contained in $S$,
so it has at least two adjacent chords which are taken care of by two caring vertices.
Since each vertex in $D$ has indegree at most two, the two caring vertices must be distinct.
In case of (d), (e), $S$ contains the subgraph of $P(D)$ induced by the vertex set of a cycle $H_2$ of length four, so it has a chord which are taken care of by a caring vertex.
Since $C$ is an induced subdigraph of $D$, neither the vertices taking care of chords of $H_1$ nor the vertices taking care of chords of $H_2$ can be on $C$.
Therefore, in case of (a), (b), (c),
the vertex common to the two adjacent chords of $H_1$ must have two out-neighbors not on $C$
and in case of (d), (e), there are two nonadjacent vertices on $H_2$ each of which has an out-neighbor not on $C$.
However, by the structure of $C$, each vertex on $H_1$ has an out-neighbor on $C$
and especially in case of (d), (e), there are two adjacent vertices on $H_2$ each of which has two out-neighbors on $C$ in $D$.
Hence, in either case, we obtain a vertex of outdegree at least three, which is a contradiction.
\end{proof}

\begin{figure}[!!!h]
\psfrag{a}{(a)}
\psfrag{b}{(b)}
\psfrag{c}{(c)}
\centering{
\includegraphics[scale=0.5]{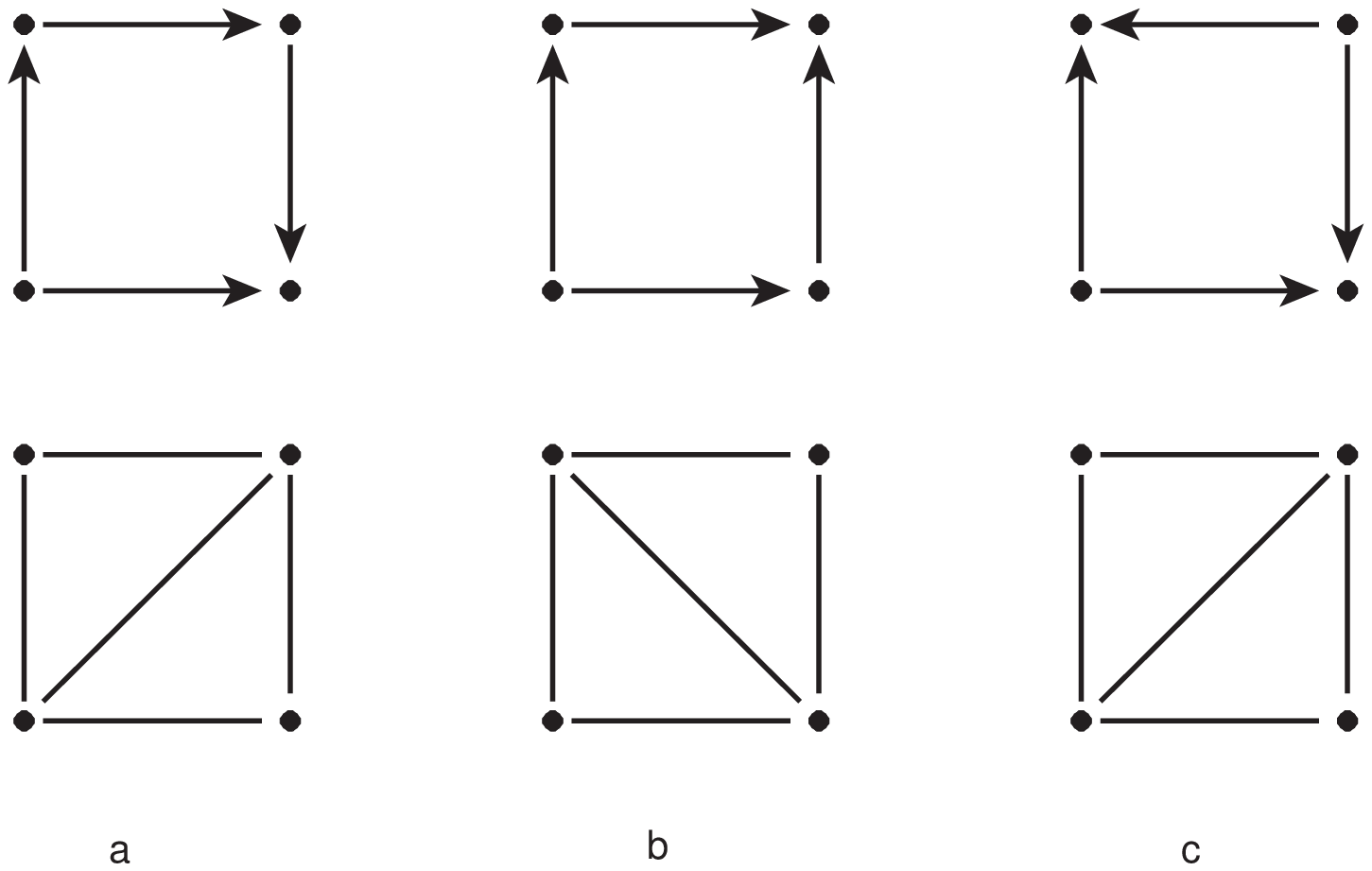}}
\caption{The non-isomorphic orientations of cycles of length $4$ and their corresponding phylogeny graphs which are chordal.}
\label{fig:4cycles}
\end{figure}

\begin{figure}[!!!h]
\psfrag{a}{(a)}
\psfrag{b}{(b)}
\psfrag{c}{(c)}
\psfrag{d}{\small $v_1$}
\psfrag{e}{\small $v_2$}
\psfrag{f}{\small $v_3$}
\psfrag{g}{\small $v_4$}
\psfrag{h}{\small $v_5$}
\centering{
\includegraphics[scale=0.5]{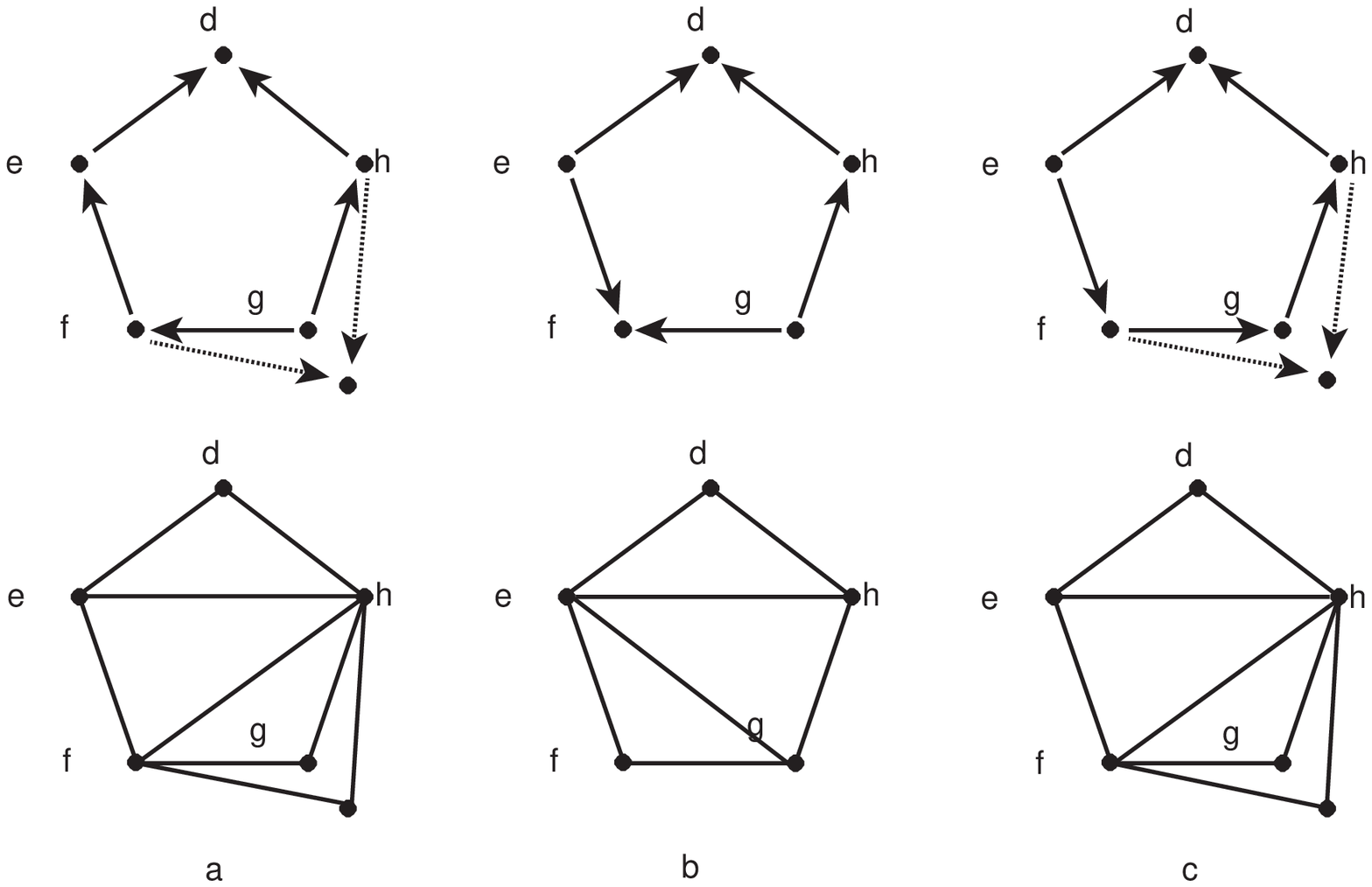}}
\caption{The non-isomorphic acyclic orientations of the 5-cycle $v_1v_2v_3v_4v_5v_1$ and chordal phylogeny graphs of acyclic digraphs including them as induced subdigraphs.}
\label{fig:5cycles}
\end{figure}

\begin{figure}[!!!h]
\psfrag{a}{(a)}
\psfrag{b}{(b)}
\psfrag{c}{(c)}
\psfrag{d}{\small $v_1$}
\psfrag{e}{\small $v_2$}
\psfrag{f}{\small $v_3$}
\psfrag{g}{\small $v_4$}
\psfrag{h}{\small$v_5$}
\psfrag{k}{\small $v_6$}
\centering{
\includegraphics[scale=0.5]{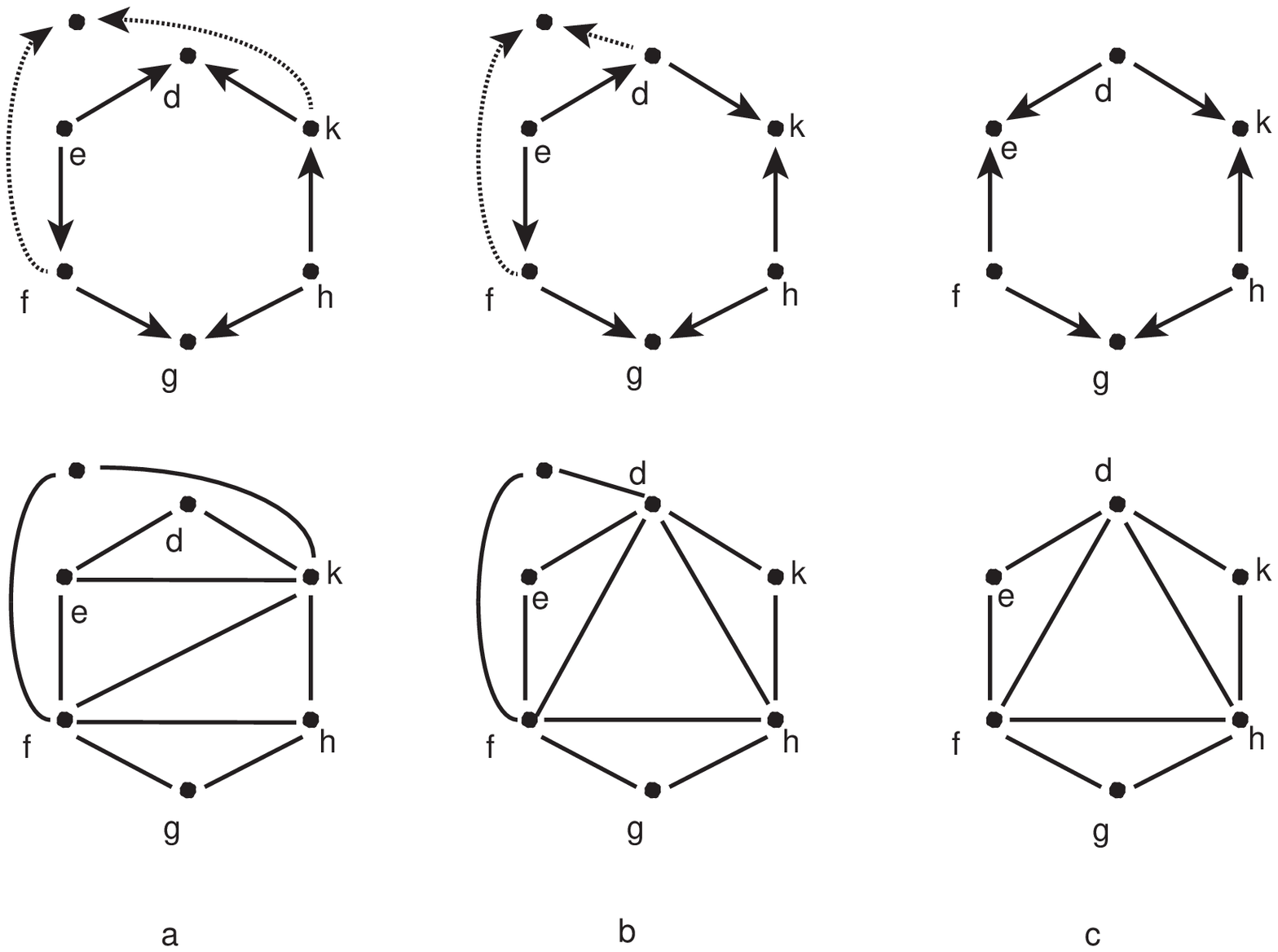}}
\caption{The non-isomorphic acyclic orientations of the $6$-cycle $v_1v_2v_3v_4v_5v_6v_1$ satisfying the property that the phylogeny graph of an acyclic digraph
including one of them as an induced subdigraph is chordal and their corresponding phylogeny graphs which are chordal.}
\label{fig:6cycles}
\end{figure}

%%%%%%%%%%%%%%%%%%%%%%%%%%%%%%%%%%%%%%%%%%%%%%%%%%%%%%%%%%%%%%%%%%%%%%%%%%%%%%%%%%%
\subsection{Holes in the underlying graph and the phylogeny graph of a $(2,2)$ digraph}
%%%%%%%%%%%%%%%%%%%%%%%%%%%%%%%%%%%%%%%%%%%%%%%%%%%%%%%%%%%%%%%%%%%%%%%%%%%%%%%%%%%

As the edges on holes in the underlying graph of a digraph are inherited to its phylogeny graph,
one may expect that the phylogeny graph cannot have a hole longer than the ones in the underlying graph.
Contrary to this expectation, each hole in the underlying graph of $D$ in Figure~\ref{fig:eg18}(a) has length $4$
while the hole $H=v_1v_2v_3v_6v_7v_4v_1$ in its phylogeny graph $P(D)$ in Figure~\ref{fig:eg18}(b) has length $6$.
However, the phylogeny graph of a $(2,2)$ digraph lives up to the expectation as long as its underlying graph is chordal.
Before we prove it, we derive the following statements to be used in the proof.

\begin{figure}[h]
\psfrag{1}{$v_1$}\psfrag{2}{$v_2$}
\psfrag{3}{$v_3$}\psfrag{4}{$v_4$}
\psfrag{5}{$v_5$}\psfrag{6}{$v_6$}
\psfrag{7}{$v_7$}\psfrag{8}{$v_8$}
\psfrag{w}{$w_1$}\psfrag{x}{$w_2$}
\psfrag{a}{(a)}\psfrag{b}{(b)}
\centering{
\includegraphics[height=5cm]{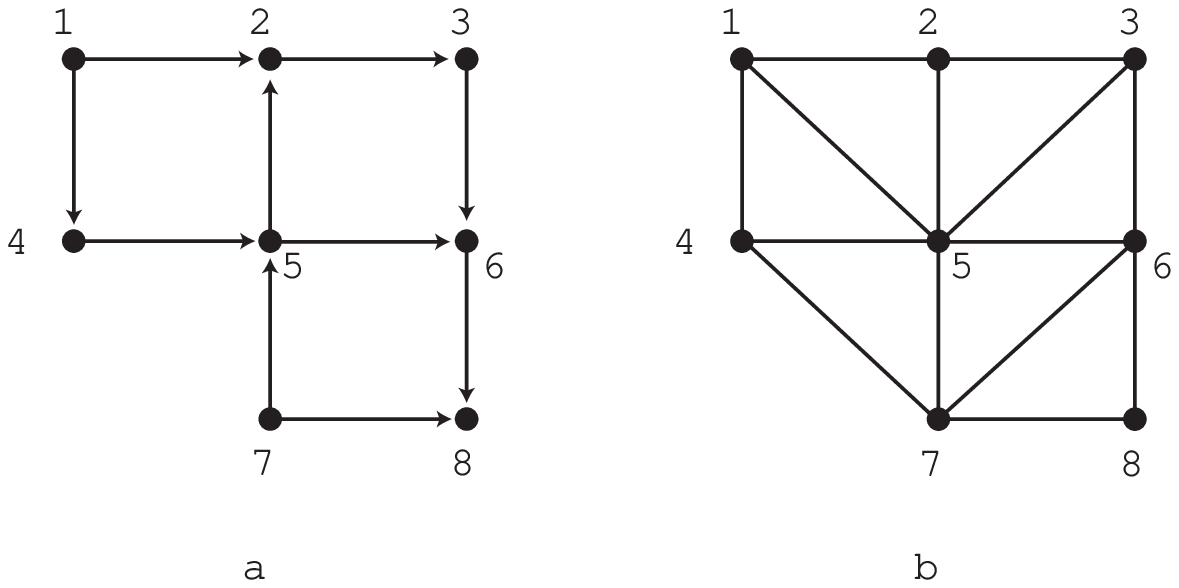}}
\caption{(a) A $(2,2)$ digraph $D$, (b) The phylogeny graph of $D$}
\label{fig:eg18}
\end{figure}

\begin{Prop}\label{prop:10}
Suppose that the phylogeny graph of a $(2,2)$ digraph $D$ contains a hole $H$.
If $v$ is a vertex taking care of an edge on $H$,
then $v$ is not on $H$.
\end{Prop}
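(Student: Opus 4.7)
The plan is a short proof by contradiction that exploits only the definition of a hole (an induced cycle of length at least four) together with the definition of ``vertex taking care of an edge.''

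Suppose, for contradiction, that some vertex $v$ taking care of an edge $xy$ of $H$ lies on $H$. By the definition of caring, both $(x,v)$ and $(y,v)$ are arcs of $D$, so $xv$ and $yv$ both belong to $E(U(D)) \subseteq E(P(D))$. I would then invoke the fact that $H$ is an \emph{induced} cycle in $P(D)$: every edge of $P(D)$ whose endpoints both lie in $V(H)$ must actually be an edge of the cycle $H$. Since $x$, $y$, $v$ are pairwise distinct (as $xy$ is an edge and $v$ is a common out-neighbor of $x,y$, which cannot coincide with either by acyclicity) and all three lie on $H$, the edges $xy$, $xv$, $yv$ are all edges of $H$.

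This forces $y$ to be adjacent on $H$ to both $x$ and $v$, and $x$ to be adjacent on $H$ to both $y$ and $v$, which already exhausts the two neighbors that each of $x$ and $y$ can have on the cycle $H$. Consequently $H$ is the triangle $xyvx$ of length $3$, contradicting the assumption that $H$ is a hole (length at least $4$). Hence $v$ cannot lie on $H$.

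There is essentially no obstacle here; the argument does not even use the $(2,2)$ degree restriction, only the fact that a hole is an induced cycle of length at least four and that the cared-for edge $xy$ and the two underlying edges $xv$, $yv$ forced by caring must all sit inside $H$.
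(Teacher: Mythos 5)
Your proof is correct and is essentially the paper's own argument: since $v$ would lie on the hole together with the consecutive vertices $x,y$, at least one of the edges $xv$, $yv$ (which exist in $U(D)\subseteq P(D)$ because $v$ is a common out-neighbor) would be a chord of $H$, contradicting that $H$ is induced. The extra care you take in checking $v\neq x,y$ is fine but not a departure from the paper's route.
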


\begin{proof}
Suppose to the contrary that there exists a vertex $v$ on $H$ taking care of an edge $xy$ on $H$.
Then the edge $xv$ or the edge $yv$ is a chord of $H$, which is a contradiction.
\end{proof}

Given a $(2,2)$ digraph $D$,
suppose that the phylogeny graph $P(D)$ has a hole $H$ of length $n$ for $n\geq 4$
and $e_1,e_2, \ldots, e_m$ are the cared edges of $H$.
Let $w_1,w_2,\dots, w_m$ be vertices taking care of $e_1,e_2, \ldots, e_m$, respectively, and $W = \{w_1,w_2,\ldots, w_m\}$.
We call $W$ a \emph{set extending $H$}.
Then $W \subseteq V(D)-V(H)$ by Proposition~\ref{prop:10}.
We may obtain a cycle in $U(D)$ from $H$ by replacing each edge $e_i$
with a path of length two from one end of $e_i$ to the other end of $e_i$ with the interior vertex $w_i$.
We call such a cycle the \emph{cycle obtained from $H$ by $W$}.
Let $L$ be the subgraph of $U(D)$ induced by $V(H) \cup W$.
We call $L$ the \emph{subgraph of $U(D)$ obtained from $H$ by $W$}.
By definition, the cycle obtained from $H$ by $W$ is a hamiltonian cycle of the subgraph obtained from $H$ by $W$.
For example the graph in Figure~\ref{fig:eg8}(b)
is the subgraph obtained from $H$ by $\{w_1,w_2\}$ in Figure~\ref{fig:eg8}(a).

\begin{figure}
\psfrag{1}{$e_2$}\psfrag{2}{$e_1$}
\psfrag{3}{$v_1$}\psfrag{4}{$v_2$}
\psfrag{5}{$w_2$}\psfrag{6}{$w_1$}
\psfrag{H}{(a)}\psfrag{G}{(b)}
\psfrag{a}{$v_1$}\psfrag{b}{$v_2$}
\psfrag{c}{$v_3$}\psfrag{d}{$v_4$}
\psfrag{e}{$v_5$}\psfrag{f}{$v_6$}
\centering{
\includegraphics[height=4.5cm]{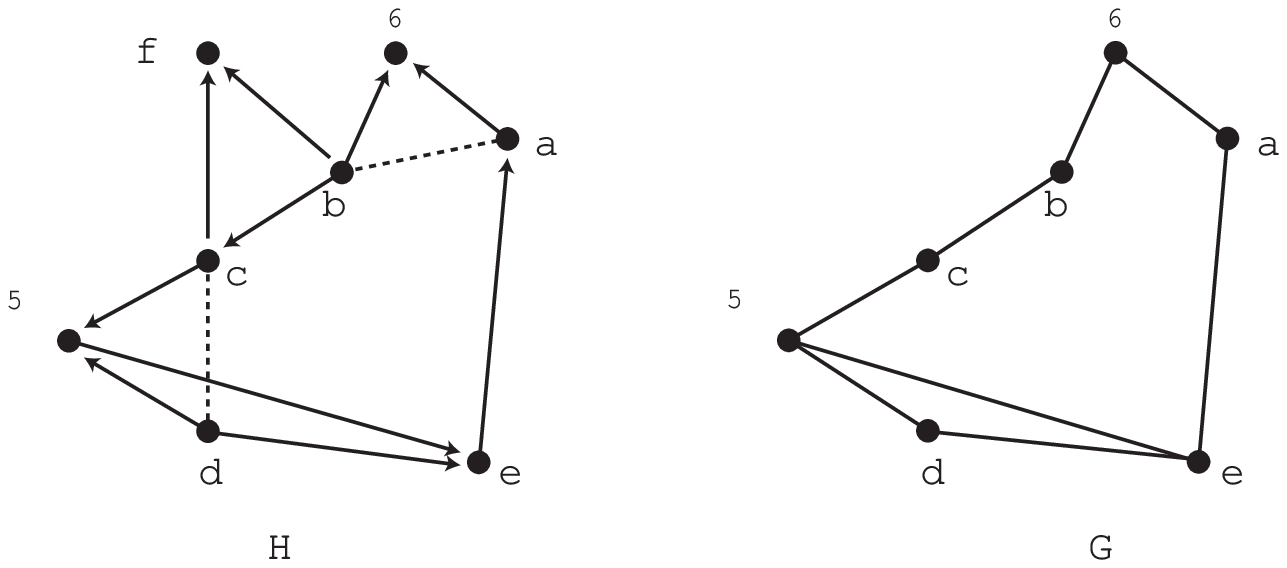}}
\caption{(a) A hole $H=v_1v_2v_3v_4v_5v_1$ in the phylogeny graph of a $(2,2)$ digraph $D$.
(b) The subgraph obtained from $H$ by $\{w_1,w_2\}$.}
\label{fig:eg8}
\end{figure}

\begin{Lem}\label{lem:11}
Suppose that the phylogeny graph of a $(2,2)$ digraph $D$ contains a hole $H$.
If $L$ is the subgraph of the underlying graph $U(D)$ of $D$ obtained from $H$ by a set $W$ extending $H$,
then there is no edge joining two vertices belonging to $W$ in $U(D)$.
\end{Lem}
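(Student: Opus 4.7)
The plan is a short proof by contradiction that exploits the fact that each caring vertex in a $(2,2)$ digraph already has its indegree fully saturated by the endpoints of the cared edge it handles. First I would suppose, toward a contradiction, that there exist distinct elements $w_i, w_j \in W$ with $w_iw_j \in E(U(D))$. By the definition of the underlying graph, this forces $(w_i, w_j) \in A(D)$ or $(w_j, w_i) \in A(D)$, and by symmetry I may assume $(w_i, w_j) \in A(D)$.

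Next I would look at the in-neighbors of $w_j$ in $D$. Since $w_j$ takes care of the cared edge $e_j$ on $H$, both (distinct) endpoints $x$ and $y$ of $e_j$ are in-neighbors of $w_j$ in $D$. These endpoints lie on $H$, whereas Proposition \ref{prop:10} guarantees that $w_i$ lies outside $V(H)$; hence $w_i \notin \{x, y\}$, and together with the assumed arc $(w_i, w_j)$ this exhibits three distinct in-neighbors $x$, $y$, $w_i$ of $w_j$. That contradicts the hypothesis that $D$ is a $(2,2)$ digraph, and completes the proof.

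The one bookkeeping point to flag is that the elements of $W$ are genuinely distinct, so that ``distinct $w_i, w_j$'' is a nontrivial assumption: this follows at once from Corollary \ref{lem:7}, since the cared edges $e_1,\dots,e_m$ are all different and no vertex of $D$ may take care of two of them. Beyond this, I do not see any real obstacle; the argument is essentially a one-line indegree count, with Proposition \ref{prop:10} doing the work of separating $w_i$ from the endpoints of $e_j$.
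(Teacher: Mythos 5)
Your proof is correct and follows essentially the same route as the paper's: assume an arc between two caring vertices, note that the head already has two in-neighbors on $H$ from the edge it takes care of, and derive an indegree of at least three. Your explicit appeal to Proposition~\ref{prop:10} to separate the tail from those two in-neighbors is a detail the paper leaves implicit (it already established $W \subseteq V(D)-V(H)$), but the argument is the same.
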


\begin{proof}
Suppose to the contrary that there exist vertices $w_1,w_2 \in W$ that are adjacent in $U(D)$.
Without loss of generality, we may assume that $(w_1,w_2) \in A(D)$.
Since $L$ is obtained from $H$ by $W$, $w_2$ is a vertex taking care of an edge on $H$.
Thus $w_2$ has two in-neighbors in $D$ which belong to $V(H)$.
Since $(w_1,w_2) \in A(D)$,
$w_2$ has indegree at least three, which is a contradiction.
\end{proof}

\begin{Lem}\label{lem:12}
Let $H$ be a hole in the phylogeny graph $P(D)$ of a $(2,2)$ digraph $D$,
and $L$ be the subgraph of the underlying graph $U(D)$ of $D$ obtained from $H$ by a set $W$ extending $H$.
If $L$ is chordal and $xy \in E(H)$ is an edge in $P(D)$ taken care of by $w \in W$,
then there exists a vertex $z$ on $H$
such that $z$ is adjacent to both $x$ and $w$ in $L$.
\end{Lem}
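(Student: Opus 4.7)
The plan is to work inside the graph $L$, exploiting the fact that the cycle $C^*$ obtained from $H$ by $W$ is, by construction, a hamiltonian cycle of $L$. Since we have assumed $L$ chordal, Proposition~\ref{prop:1} is available on $C^*$, and it should deliver the required vertex $z$ once we combine it with Lemma~\ref{lem:11} to exclude the possibility that $z$ comes from the caring set $W$.

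In more detail, first I would observe that because $xy$ is the cared edge of $H$ taken care of by $w\in W$, the cycle $C^*$ contains the two-edge path $x\text{-}w\text{-}y$ in place of the non-edge $xy$; in particular $xw$ is an edge of $C^*$. Next I would apply Proposition~\ref{prop:1} to the chordal graph $L$, the hamiltonian cycle $C^*$, and the edge $xw$, obtaining a vertex $z\in V(C^*)=V(H)\cup W$ with $z\neq x$, $z\neq w$, and $z$ adjacent in $L$ to both $x$ and $w$.

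The only thing still to verify is that $z$ lies on $H$ rather than in $W$. This is exactly where Lemma~\ref{lem:11} enters: if $z\in W$, then $z$ and $w$ would be two distinct elements of $W$ joined by an edge of $L\subseteq U(D)$, contradicting Lemma~\ref{lem:11}. Hence $z\in V(H)$, as required.

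The step I expect to be the main (if modest) obstacle is noticing that the natural object to feed into Proposition~\ref{prop:1} is the hamiltonian cycle $C^*$ of $L$ rather than the hole $H$ itself; once this reformulation is made the argument is essentially immediate, with Lemma~\ref{lem:11} doing the clean-up to force $z$ onto $H$.
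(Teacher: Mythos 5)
Your proof is correct and follows essentially the same route as the paper: apply Proposition~\ref{prop:1} to the hamiltonian cycle of $L$ obtained from $H$ by $W$ and the edge $xw$ on it, then use Lemma~\ref{lem:11} to rule out $z\in W$. No gaps.
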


\begin{proof}
Let $C$ be the cycle obtained from $H$ by $W$.
Then $C$ is a hamiltonian cycle of $L$.
Since $L$ is obtained from $H$ by $W$ containing $w$,
the edge $xw$ is on $C$.
Since $L$ is chordal, there exists a vertex $z \in V(C)-\{x,w\}$
that is adjacent to both $x$ and $w$ in $L$ by Proposition~\ref{prop:1}.
Since $L$ is a subgraph of $U(D)$, $w$ and $z$ are adjacent in $U(D)$.
By Lemma~\ref{lem:11}, $z \notin W$ and so $z$ belongs to $H$, which completes the proof.
\end{proof}

\begin{Thm} \label{Thm:13}
Let $H$ be a hole of the phylogeny graph $P(D)$ of a $(2,2)$ digraph $D$.
Then there is a hole $\phi(H)$ in the underlying graph $U(D)$ of $D$ such that
\begin{itemize}
\item
$\phi(H)$ equals $H$ if $H$ is a hole in $U(D)$;
\item
$\phi(H)$ is a hole in $U(D)$ only containing vertices in the subgraph obtained from $H$ by a set extending $H$ otherwise.
\end{itemize}
Moreover, if the holes of $P(D)$ are mutually vertex-disjoint and no hole in $U(D)$ has length $4$ or $6$,
then there exists an injective map from the set of holes in $P(D)$ to the set of holes in $U(D)$.
\end{Thm}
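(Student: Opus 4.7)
The plan is to first construct, for each hole $H$ of $P(D)$, a hole $\phi(H)$ of $U(D)$ with the claimed vertex-set property, and then verify that $\phi$ is injective under the additional hypotheses. The existence splits on whether $H$ already is a hole in $U(D)$: in that case I take $\phi(H)=H$, which is indeed a hole in $U(D)$ since any $U(D)$-chord of $H$ would be a chord of $H$ in $P(D)\supseteq U(D)$. Otherwise $H$ has at least one cared edge; fixing a set $W$ extending $H$ and letting $L:=U(D)[V(H)\cup W]$, the task becomes to exhibit a hole inside $L$, to be taken as $\phi(H)$.

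To produce such a hole in $L$, I argue by contradiction, supposing $L$ is chordal. Choosing a cared edge $xy$ on $H$ with caring vertex $w\in W$, the segment $xwy$ lies on the Hamiltonian cycle $C$ of $L$. Applying Proposition~\ref{prop:1} in $L$ to each of the edges $xw$ and $wy$ of $C$, and combining with Lemma~\ref{lem:11}, the fact that $xy\notin U(D)$, and the saturation of the indegree of $w$ by $\{x,y\}$, I can identify the two common neighbors produced as the $H$-neighbors $x'$ of $x$ (distinct from $y$) and $y'$ of $y$ (distinct from $x$), with both $(w,x'),(w,y')\in A(D)$. Then $w$ is a common out-neighbor of $x'$ and $y'$, so $x'y'\in E(C(D))\subseteq E(P(D))$. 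If $|H|\ge 5$, the vertices $x',y'$ are not $H$-consecutive, so $x'y'$ is a chord of $H$ in $P(D)$, contradicting $H$ being a hole. If $|H|=4$, so $H=xyy'x'x$, the $(2,2)$ degree caps and acyclicity force the orientations $x\to x'$ and $y\to y'$, after which the remaining $U(D)$-edge $x'y'$ cannot be oriented without pushing the indegree of $x'$ or $y'$ past $2$.

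For injectivity, assume $\phi(H_1)=\phi(H_2)=J$ with $H_1\neq H_2$; I treat three cases. If both $H_i$ are holes in $U(D)$, then $\phi(H_i)=H_i$ yields $H_1=J=H_2$, a contradiction. If exactly one, say $H_1$, is a hole in $U(D)$, then $V(H_1)=V(J)\subseteq V(H_2)\cup W_2$ and vertex-disjointness force $V(H_1)\subseteq W_2$; each $v\in V(H_1)$ has its indegree $2$ saturated in $V(H_2)$, so both $H_1$-neighbors of $v$ (lying in $V(H_1)\subseteq W_2$) must be out-neighbors of $v$, producing an edge of $U(D)$ between two vertices of $W_2$—contradicting Lemma~\ref{lem:11}. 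In the remaining case both $H_i$ have cared edges; any vertex in $W_1\cap W_2$ would have indegree at least $4$, two from each of the disjoint $V(H_i)$, so $W_1\cap W_2=\emptyset$, and $V(J)$ partitions as $A\sqcup B$ with $A\subseteq V(H_1)\cap W_2$ and $B\subseteq W_1\cap V(H_2)$. The same indegree-saturation argument precludes any $A$-$A$ or $B$-$B$ edge in $U(D)$, so $J$ alternates between $A$ and $B$ and $|J|$ is even; combined with the hypothesis $|J|\notin\{4,6\}$ and a finer analysis of the orientations of the $A$-$B$ edges of $J$—each forced by whether its $A$-endpoint belongs to the other's caring relation, with acyclicity preventing the "both" configuration—the saturated indegree structure rules out the surviving even values of $|J|$, eliminating this case.

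The most delicate step will be closing out the last case of injectivity: the length hypothesis immediately clears $|J|\in\{4,6\}$, but ruling out $|J|\ge 8$ requires a careful bookkeeping of how the forced orientations of the $A$-$B$ edges of $J$, the saturated indegrees of $A$-vertices in $V(H_2)$ and of $B$-vertices in $V(H_1)$, and the acyclicity of $D$ interact. In particular one needs that no acyclic bipartite directed structure on $A\cup B$ of the required form can be simultaneously consistent with the caring patterns of $H_1$ and $H_2$.
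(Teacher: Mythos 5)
There are two genuine problems with your proposal, one in each half.

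\textbf{Existence part.} Your identification of the common neighbors $x'$ and $y'$ and the forced arcs $(w,x'),(w,y')\in A(D)$ is correct (it is essentially the paper's Lemma~\ref{lem:12} plus the indegree saturation of $w$). But the next step is wrong: having $(w,x')$ and $(w,y')$ as arcs makes $w$ a common \emph{in}-neighbor of $x'$ and $y'$, not a common out-neighbor, and a competition edge requires a common \emph{out}-neighbor (common prey). So $x'y'\in E(C(D))$ does not follow, and your chord of $H$ evaporates. The paper instead uses acyclicity to force $x\to x'$ and $y\to y'$ (saturating the indegrees of $x'$ and $y'$), deletes $x,y$ to get a smaller hole $wx'\cdots y'w$ with its induced subgraph of $U(D)$ still chordal, and applies the common-neighbor lemma \emph{a second time} to the edge $wx'$; this forces $x'y'\in E(U(D))$, which is then impossible because neither endpoint can absorb another in-neighbor. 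Your $|H|=4$ subcase has a related gap: you assume $x'y'$ is a $U(D)$-edge, but it could a priori be a cared edge, which your argument does not exclude.

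\textbf{Injectivity part.} Your first two cases are fine, and the reduction of the last case to an alternating even hole $J$ of length at least $8$ matches the paper. But the closing step is exactly the hard part, and you have only sketched a hope that "careful bookkeeping" of orientations rules out $|J|\ge 8$; no such argument is given, and it is doubtful one exists in that form. The paper does \emph{not} show the collision configuration is impossible. Instead it chooses, among all maps $\phi$ produced by the existence argument, one whose image is largest; given a collision $\phi^*(H_1)=\phi^*(H_2)=H^*$ with $|H^*|\ge 8$, it applies Lemma~\ref{lem:subgraph} to $H^*$ to find a hole of $P(D)$ inside $V(H^*)$, which by vertex-disjointness must be $H_1$ (say), so that every vertex of $H_1$ lies in $W_2$ and every edge of $H_1$ is cared for by a vertex of $H_2$; it then explicitly constructs a \emph{different} hole of $U(D)$ through a vertex $u$ of $H_1$ and re-routes $\phi^*(H_2)$ to it, contradicting maximality of the image. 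This re-assignment idea is the essential content of the "Moreover" part and is absent from your proposal.
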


\begin{proof}
Let $H = v_1v_2\cdots v_nv_1$ be a hole in $P(D)$.
If no edge of $H$ is taken care of, then $H$ is a hole in $U(D)$ and we let $\phi(H)=H$.

Suppose that at least one edge of $H$ is taken care of.
Let $e_1, \ldots, e_m$ be the cared edges of $H$ and let $w_i$ be a vertex taking care of $e_i$ for each $i=1,\ldots,m$.
Let $L$ be the subgraph of $U(D)$ obtained from $H$ by $\{w_1,\ldots,w_m\}$.

To reach a contradiction, suppose that $L$ is chordal.
Without loss of generality, we may assume that $v_1$ and $v_2$ are the end vertices of $e_1$.
By Lemma~\ref{lem:12}, $v_1$ and $w_1$ have a common neighbor in $L$, say $z$, on $H$.
Since $v_1v_2$ is a cared edge, $z \neq v_2$.
Since $H$ is a hole in $P(D)$, the edge $v_1z$ cannot be a chord of $H$ and so $z=v_{n}$.
Therefore $v_nw_1$ and $v_nv_1$ are edges in $L$.
Since $L$ is a subgraph of $U(D)$, $v_nw_1$ and $v_nv_1$ are edges in $U(D)$.
Since $w_1$ has $v_1$ and $v_2$ as in-neighbors and $D$ is a $(2,2)$ digraph,
$v_n$ must be an out-neighbor of $w_1$ in $D$.
Since $D$ is acyclic, $v_n$ is an out-neighbor of $v_1$.
Similarly, $v_3$ is a common out-neighbor of $v_2, w_1$ in $D$ (see Figure~\ref{fig:eg11}).

\begin{figure}
\psfrag{1}{$v_1$}\psfrag{2}{$v_2$}
\psfrag{3}{$v_3$}\psfrag{n}{$v_n$}
\psfrag{w}{$w_1$}
\centering{
\includegraphics[height=3.25cm]{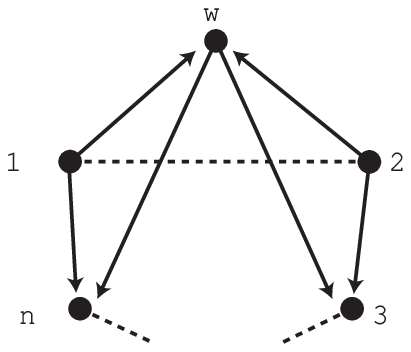}}
\caption{The arcs of $D$ corresponding to $v_1v_n,v_1w_1,v_2v_3,v_2w_1,w_1v_n,w_1v_3$ in $U(D)$}
\label{fig:eg11}
\end{figure}

Now we consider the graph $L^*$ obtained by deleting $v_1,v_2$ from $L$.
We note that $H^* := w_1v_3v_4 \cdots v_n w_1$ is a hole in $P(D)$ and that $L^*$ is the subgraph of $U(D)$ obtained from $H^*$ by $\{w_2, w_3, \ldots, w_n\}$.
By applying Proposition~\ref{prop:3} to $L$, we can conclude that the subgraph $L^*$  is chordal.
For an edge $w_1v_3$ on $L^*$, there exists a vertex $z^*\in V(H^*)$ that is adjacent to both $w_1$ and $v_3$ in $U(D)$ by Lemma~\ref{lem:12}.
Since $z^*w_1$ and $z^*v_3$ are edges of $L^*$ and $L^*$ is a subgraph of $U(D)$, they are edges in $U(D)$.
Then $z^*=v_n$ since $z^*\in N_{U(D)}(w_1)=\{v_1,v_2,v_3,v_n\}$ and $z^*\notin \{v_1,v_2,v_3\}$.
Therefore the edge $z^*v_3$ in $U(D)$ is now $v_nv_3$.
Thus either $(v_3,v_n)$ or $(v_n,v_3)$ is an arc in $D$.
However $v_n\notin \{v_2,w_1\}=N^-_{D}(v_3)$ and $v_3\notin \{v_1,w_1\}=N^-_{D}(v_n)$, which is a contradiction.
Thus $L$ contains a hole, that is, there exists a hole all of whose vertices are in $L$.
We take one of such holes as $\phi(H)$.
Then $\phi$ defines a map from the set of holes in $P(D)$ to the set of holes in $U(D)$.

To show the second part of the theorem, we assume that the holes of $P(D)$ are mutually vertex-disjoint and no hole in $U(D)$ has length $4$ or $6$.
We take $\phi^*$ whose image has the largest size among the maps that can be obtained by the way described in the previous argument.
Then we take two distinct holes $H_1$ and $H_2$ in $P(D)$.
By the hypothesis, $H_1$ and $H_2$ are vertex-disjoint.
Let $L_1$ and $L_2$ be the subgraphs of $U(D)$ obtained from $H_1$ and $H_2$ by sets $W_1$ and $W_2$ extending $H_1$ and $H_2$, respectively.
By the above argument, $\phi^*(H_1)$ (resp.\ $\phi^*(H_2)$) is a hole whose vertices are on $L_1$ (resp.\ $L_2$).
Suppose $\phi^*(H_1)=\phi^*(H_2)=:H^*$.  If $H^*$ contains a vertex neither on $H_1$ nor on $H_2$,
then it is a vertex taking care of an edge on $H_1$ and an edge on $H_2$ at the same time, which contradicts the hypothesis that $D$ is a $(2,2)$ digraph.
Thus $H^*$ consists of vertices on $H_1$ or $H_2$.
Suppose that $H^*$ contains two consecutive vertices both of which are on $H_1$ (resp. $H_2$).
Then they are adjacent by an arc $a$ in $D$.
Since they belong to $H_1$ (resp. $H_2$), they are vertices taking care of edges on $H_2$ (resp. $H_1$) since $H_1$ and $H_2$ are vertex-disjoint.
Therefore each of them has two in-neighbors on $H_2$.
However, due to $a$, one of them must have indegree at least three in $D$ and we reach a contradiction.
Thus the vertices on $H^*$ belong alternatively to $H_1$ and $H_2$ and so $H^*$ is a hole of an even length.
By the hypothesis, $H^*$ has length at least $8$.
Then, by applying Lemma~\ref{thm:9} to the subgraph of $P(D)$ induced by $V(H^*)$, there exists a hole consisting of vertices of $H^*$ in $P(D)$.
Since each vertex on $H^*$ belongs to $H_1$ or $H_2$, by the hypothesis that the holes in $P(D)$ are vertex-disjoint,
this hole is either $H_1$ or $H_2$.
Without loss of generality, we may assume that it is $H_1$.
Then $H^*$ contains all the vertices of $H_1$ and so each vertex on $H_1$ takes care of an edge on $H_2$.
Moreover, since $|V(H_2) \cap V(H^*)|$ is the same as the number of edges on $H_1$, each edge on $H_1$ is taken care of by a vertex in $V(H_2) \cap V(H^*)$.

Let $C$ be the cycle obtained from $H_2$ by $W_2$.
Then
\[V(H_1) \subseteq V(H^*) \subseteq V(C) = V(L_2) = V(H_2) \cup W_2.\]
Now, since $H_1$ and $H_2$ are vertex-disjoint, $V(H_1) \subseteq W_2$ and so each vertex on $H_1$ takes care of an edge on $H_2$.

Take a vertex $u$ on $H_1$. Then $u$ is adjacent to two vertices, say $z_1$ and $z_2$, on $H_1$.
As we claimed that each vertex both on $H^*$ and $H_2$ takes care of each edge of $H_1$,
there are out-neighbors $v$ and $w$ of $u$ such that $v$ and $w$ are on $V(H^*) \cap V(H_2)$ and $(z_1, v)$, $(u,v)$, $(z_2, w)$, $(u,w)$ are arcs in $D$.
Since $v$ and $w$ are caring vertices, they are not adjacent in $U(D)$ by Lemma~\ref{lem:11}.
Since $u$ belongs to $H^*$, $u$ is a  vertex taking care of an edge $xy$ on $H_2$. Then $x$ and $y$ are in-neighbors of $u$ in $D$.
We take the $(v,w)$-section $P$ of $C$ that does not contain $u$.
Then $x$ and $y$, which are consecutive on $H_2$, do not belong to $P$ since $xuy$ is a section of $C$ by the way in which $C$ is obtained.
Thus, by the degree restriction on $D$, $u$ is not adjacent to any vertex on $P$ other than $v$ and $w$ in $U(D)$.
We take a shortest $(v,w)$-path $P^*$ in the subgraph of $U(D)$ induced by the vertex set of $P$.
Since $v$ and $w$ are not adjacent in $U(D)$, $uP^*u$ is a hole in $U(D)$.
Suppose $uP^*u=H^*$.
Then, since it is on $H^*$, $z_1$ is on $P^*$.
If $x$ and $v$ are adjacent in $U(D)$, then, since $v$ has already two in-neighbors $z_1$ and $u$, the edge $xv$ in $U(D)$ has orientation $(v,x)$ to form a directed cycle $u\to v \to x \to u$, which is impossible.
Thus $x$ and $v$ are not adjacent in $U(D)$.
Hence, for the $(u,v)$-section $Q$ of $C$ containing $x$,
$Qu$ contains a hole $H^{**}$ in $U(D)$ containing $u$ and $x$ since $u$ is not adjacent to any vertex other than $x$ and $v$ on $Q$.
Since $x$ is not on $P$, it is not on $uP^*u$.
Then, since $uP^*u=H^*$, $x$ does not belong to $H^*$ and therefore  $H^{**}$ is distinct from $H^*$.
Therefore we can conclude that $uP^*u$ or $H^{**}$ is a hole different from $H^*$ containing $u$ in $U(D)$.
We change $\phi^*(H_2)$ into $uP^*u$ if $H^* \neq uP^*u$ and into $H^{**}$ otherwise.
By the degree restriction on $D$, $u$ belongs to only $L_1$ and $L_2$.
Thus the new $\phi^*(H_2)$ does not equal any of $\phi^*$-values of other holes in $P(D)$
and we have obtained a map from the set of holes in $P(D)$ to a set of holes in $U(D)$
with image larger than $\phi^*$, which contradicts the choice of $\phi^*$.
\end{proof}

\begin{Rem}
The ``Moreover" part of Theorem~\ref{Thm:13} does not hold in general.
For the digraph $D$ given in Figure~\ref{fig:eg8},
the holes in $P(D)$ are $v_1v_2v_3v_4v_5v_1$ and $v_1v_2v_3w_2v_5v_1$ while the hole in $U(D)$ is $v_1w_1v_2v_3w_2v_5v_1$.
\end{Rem}

\begin{Cor}
Let $D$ be a $(2,2)$ digraph.
Suppose that the holes of $P(D)$ are mutually vertex-disjoint and no holes in $U(D)$ has length $4$ or $6$.
Then the number of holes in $U(D)$ is greater than or equal to that of holes in $P(D)$.
\end{Cor}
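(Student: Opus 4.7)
The plan is to read the corollary directly off the ``Moreover'' part of Theorem~\ref{Thm:13}. Under the two standing hypotheses of the corollary, namely that the holes of $P(D)$ are pairwise vertex-disjoint and that $U(D)$ contains no hole of length $4$ or $6$, Theorem~\ref{Thm:13} already produces an injective map $\phi$ from the collection of holes in $P(D)$ into the collection of holes in $U(D)$. Since $D$ is finite, both collections are finite sets, so the mere existence of an injection forces the cardinality of its domain to be at most the cardinality of its codomain. This yields the desired inequality between the number of holes in $U(D)$ and the number of holes in $P(D)$.

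Concretely, I would write a one- or two-sentence proof: ``By the second part of Theorem~\ref{Thm:13}, there is an injective map from the set of holes of $P(D)$ to the set of holes of $U(D)$. As both sets are finite, we conclude that $|\{\text{holes in } U(D)\}| \ge |\{\text{holes in } P(D)\}|$.'' No auxiliary lemma needs to be introduced, and no case analysis is required beyond what was already done in Theorem~\ref{Thm:13}.

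I do not expect any obstacle. All the real combinatorial work, namely constructing $\phi^{*}$ with the largest image, verifying that two holes $H_1, H_2$ with a common image can be separated by rerouting one of them through a shortest $(v,w)$-path $P^{*}$, and using the degree bound together with the absence of short holes in $U(D)$ to guarantee that the modified hole is distinct from existing $\phi^{*}$-values, has already been carried out inside the proof of Theorem~\ref{Thm:13}. The corollary is simply the quantitative shadow of that injection.
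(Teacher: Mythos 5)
Your proposal is correct and matches the paper's intent exactly: the corollary is stated without a separate proof precisely because it follows immediately from the injective map produced by the ``Moreover'' part of Theorem~\ref{Thm:13}, together with finiteness of the two sets of holes. Nothing further is needed.
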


\begin{Cor}\label{cor:chordal}
Let $D$ be a $(2,2)$ digraph.
If $U(D)$ is chordal, then $P(D)$ is also chordal.
\end{Cor}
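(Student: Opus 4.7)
The plan is to derive this corollary as an immediate contrapositive consequence of the first (unconditional) part of Theorem~\ref{Thm:13}. I would argue by contradiction: assume that $P(D)$ is not chordal, so that $P(D)$ contains at least one hole $H$, that is, an induced cycle of length at least $4$.

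Next, I would apply Theorem~\ref{Thm:13} to $H$. This yields a hole $\phi(H)$ of $U(D)$, which is either $H$ itself (in the case that no edge of $H$ is a cared edge, so that $H$ already lives in $U(D)$) or else a hole whose vertex set is contained in the subgraph of $U(D)$ obtained from $H$ by some set $W$ extending $H$. In either case, $U(D)$ contains an induced cycle of length at least $4$, contradicting the hypothesis that $U(D)$ is chordal. Hence $P(D)$ must be chordal.

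I expect no real obstacle, since the substantive work has already been absorbed into Theorem~\ref{Thm:13}. In particular, the ``Moreover'' portion of that theorem, which requires the holes of $P(D)$ to be mutually vertex-disjoint and forbids holes of length $4$ or $6$ in $U(D)$, is not used here: we only need the existence of a single hole in $U(D)$ for our contradiction, not an injection from the holes of $P(D)$ into those of $U(D)$. Thus the corollary is a one-line application of the first assertion of Theorem~\ref{Thm:13}.
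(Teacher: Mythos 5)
Your proof is correct and is exactly the intended derivation: the paper states the corollary without its own proof, and the unconditional first assertion of Theorem~\ref{Thm:13} (every hole of $P(D)$ yields a hole $\phi(H)$ of $U(D)$) gives precisely the contrapositive you use. You are also right to bypass the ``Moreover'' part, whose extra hypothesis that the holes of $P(D)$ be mutually vertex-disjoint would not be available here.
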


%%%%%%%%%%%%%%%%%%%%%%%%%%%%%%%%%%%%%%%%%%%%%%%%%%%%%%%%%%%%%%%%%%%%%%%%%%%%%%%%%%%
\section{Concluding remarks}
%%%%%%%%%%%%%%%%%%%%%%%%%%%%%%%%%%%%%%%%%%%%%%%%%%%%%%%%%%%%%%%%%%%%%%%%%%%%%%%%%%%

In this paper, we obtained the complete list of orientations of cycles that are forbidden subdigraphs for the class of $(2,2)$ digraphs whose phylogeny graphs are chordal.
Furthermore, we showed that
if the holes of the phylogeny graph $P(D)$ of a $(2,2)$ digraph $D$
are mutually vertex-disjoint and no holes in the underlying graph $U(D)$ of $D$ has length $4$ or $6$,
then the number of holes in $U(D)$ is greater than or equal to
that of holes in $P(D)$,
which implies the following: If the underlying graph of a $(2,2)$ digraph $D$ is chordal,
then the phylogeny graph of $D$ is also chordal.
It would be interesting to give a good necessary and sufficient condition for $(2,2)$ digraphs having chordal phylogeny graphs.

%%%%%%%%%%%%%%%%%%%%%%%%%%%%%%%%%%%%%%%%%%%%%%%%%%%%%%%%%%%%%%%%%%%%%%%%%%%%%%%%%%%
\section*{Acknowledgments}
%%%%%%%%%%%%%%%%%%%%%%%%%%%%%%%%%%%%%%%%%%%%%%%%%%%%%%%%%%%%%%%%%%%%%%%%%%%%%%%%%%%

The first author's research was supported by Global Ph.D Fellowship Program
through the National Research Foundation of Korea (NRF)　
funded by the Ministry of Education (No.~NRF-2015H1A2A1033541).
The second author's research was supported by
the National Research Foundation of Korea(NRF) funded by the Korea government(MEST) (No.\ NRF-2015R1A2A2A01006885) and by the Korea government(MSIP)
(No.\ KRF-2016R1A5A1008055).
The fourth author's work was supported by JSPS KAKENHI Grant Number JP15K20885.

%%%%%%%%%%%%%%%%%%%%%%%%%%%%%%%%%%%%%%%%%%%%%%%%%%%%%%%%%%%%%%%%%%%%%%%%%%%%%%%%%%%%%%%%%%%%%%%%%%%%%%%%%%%%%%%%%%%%%%%%%%%%%%%%%%%%%%%%%%%%%%%
%%%%%%%%%%%%%%%%%%%%%%%%%%%%%%%%%%%%%%%%%%%%%%%%%%%%%%%%%%%%%%%%%%%%%%%%%%%%%%%%%%%%%%%%%%%%%%%%%%%%%%%%%%%%%%%%%%%%%%%%%%%%%%%%%%%%%%%%%%%%%%%


\begin{thebibliography}{99}

\bibitem{Bondy}
{J. A. Bondy and U. S. R. Murty}:
\emph{Graph Theory},
Springer, 2008.

\bibitem{Cohen}
{J. E. Cohen}:
{Interval graphs and food webs: a finding and a problem},
\emph{RAND Corporation Document 17696-PR},
Santa Monica, California, 1968.

\bibitem{Cooper}
{G. F. Cooper}:
{The computational complexity of probabilistic inference using Bayesian belief networks},
\emph{Artificial Intelligence},
\textbf{42} (1990) 393--405.

\bibitem{FM}
{K. A. S. Factor and S. K. Merz}:
{The $(1,2)$-step competition graph of a tournament},
\emph{Discrete Applied Mathematics}
\textbf{159} (2011) 100--103.

\bibitem{Ha05}
{S. Hartke}:
{The elimination procedure for the phylogeny number},
\emph{Ars Combinatoria}
\textbf{75} (2005) 297--311.

\bibitem{Kim}
{K. A. S. Hefner, K. F. Jones, S. -R. Kim, R. J. Lundgren and F. S. Roberts}:
{$(i,j)$ competition graphs},
\emph{Discrete Applied Mathematics}
\textbf{32} (1991) 241--262.

\bibitem{KA}
{A. Kamibeppu}:
{A sufficient condition for Kim's conjecture on the competition numbers of graphs},
\emph{Discrete Applied Mathematics}
\textbf{312} (2012) 1123--1127.

\bibitem{Kuhl}
{J. Kuhl}:
{Transversals and competition numbers of complete multipartite graphs}
\emph{Discrete Applied Mathematics}
\textbf{161} (2013) 435--440.

\bibitem{Larranaga}
{P. Larranaga, C. M. H. Kuijpers, M. Poza and R. H. Murga}:
{Decomposing Bayesian networks: triangulation of the moral graph with genetic algorithms},
\emph{Statistics and Computing},
\textbf{7} (1997) 19--34.

\bibitem{Lauritzen}
{S. L. Lauritzen and D. J. Spiegelhalter}:
{Local computations with probabilities on graphical structures and their application to expert systems},
\emph{Journal of the Royal Statistical Society, Series B},
\textbf{50} (1988) 157--224.

\bibitem{LiChang}	
{B. -J. Li and G. J. Chang}:
{Competition numbers of complete $r$-partite graphs}
\emph{Discrete Applied Mathematics} \textbf{160} (2012) 2271--2276.

\bibitem{PS13}
{B. Park and Y. Sano}:
{The phylogeny graphs of doubly partial orders},
\emph{Discussiones Mathematicae Graph Theory}
\textbf{33} (2013) 657--664.

\bibitem{Pearl}
{J. Pearl}:
{Fusion, propagation, and structuring in belief networks},
\emph{Artificial Intelligence},
\textbf{29} (1986) 241--288.

\bibitem{RobertsSheng}
{F. S. Roberts and L. Sheng:}
{Phylogeny graphs of arbitrary digraphs},
\emph{Mathematical Hierarchies and Biology (Piscataway, NJ, 1996)},
DIMACS Series in Discrete Mathematics and Theoretical Computer Science
\textbf{37}, American Mathematical Society (1997) 233--237.

\bibitem{RS98}
{F. S. Roberts and L. Sheng:}
{Phylogeny numbers},
\emph{Discrete Applied Mathematics}
\textbf{87} (1998) 213--228.

\bibitem{RS99}
{F. S. Roberts and L. Sheng:}
{Extremal phylogeny numbers},
\emph{Journal of Combinatorics, Information \& System Sciences}
\textbf{24} (1999) 143--149.

\bibitem{RS00}
{F. S. Roberts and L. Sheng:}
{Phylogeny numbers for graphs with two triangles},
\emph{Discrete Applied Mathematics}
\textbf{103} (2000) 191--207.

\bibitem{Shachter}
{R. D. Shachter}:
{Probabilistic inference and influence diagrams},
\emph{Operations Research}
\textbf{36} (1988) 589--604.

\bibitem{Steif}
{J. E. Steif}:
\emph{Frame dimension, generalized competition graphs, and forbidden sublist characterizations},
Ph.D. Thesis, Department of Mathematics, Rutgers University, 1982.

\bibitem{ZR}
{X. Zhang and R. Li}:
{The $(1,2)$-step competition graph of a pure local tournament that is not round decomposable},
\emph{Discrete Applied Mathematics}
\textbf{205} (2016) 180--190.

\bibitem{ZH06}
{Y. Zhao and W. He}:
{Note on competition and phylogeny numbers},
\emph{Australasian Journal of Combinatorics}
\textbf{34} (2006) 239--246.

\end{thebibliography}
\end{document}